\definecolor{morange}{rgb}{0.92, 0.51, 0.11}
\definecolor{mLightGreen}{HTML}{14B03D}
\definecolor{mPink}{HTML}{e1388d}
\definecolor{mRed}{HTML}{E83C3C}
\tikzstyle{dot}=[draw,fill=white,circle,inner sep=0pt,minimum size=4pt]
\numberwithin{dummy}{section}
\numberwithin{equation}{section}
\theoremstyle{plain}
\newtheorem{thm}{Theorem}[section]
\newtheorem{lemma}[thm]{Lemma}
\theoremstyle{plain}
\newtheorem{df}[thm]{Definition}
\theoremstyle{remark}
\newcommand{\R}{\mathbb{R}}
\newcommand{\N}{\mathbb{N}}
\newcommand{\D}{\mathbb{D}}
\newcommand*\dd{\mathop{}\!\mathrm{d}}
\newcommand*{\ddt}{\mathop{}\!\frac{\mathrm{d}}{\mathrm{d}t}}
\def\di{\mathop{\mathrm{div}}\nolimits}
\NewDocumentCommand{\oldnorm}{sO{}m}{%
  {\IfBooleanTF{#1}
    {\oldnormaux{\left|}{\right|}{#3}}
    {\oldnormaux{#2|}{#2|}{#3}}}
}
\newcommand{\oldnormaux}[3]{\mathpalette\oldnormaux@i{{#1}{#2}{#3}}}
\newcommand{\oldnormaux@i}[2]{\oldnormaux@ii#1#2}
\newcommand{\oldnormaux@ii}[4]{%
  \sbox\z@{$\m@th#1#2#4#3$}%
  \sbox\tw@{$\m@th\|$}%
  \mathopen{\hbox to\wd\tw@{\hss\vrule height \ht\z@ depth \dp\z@ width .3\wd\tw@\hss}}%
  #4
  \mathclose{\hbox to\wd\tw@{\hss\vrule height \ht\z@ depth \dp\z@ width .3\wd\tw@\hss}}%
}
\newcommand{\vect}[1]{\boldsymbol{#1}}
\newcommand{\Abs}[1]{\left\lVert{#1}\right\rVert}
\def\bv{\vect{v}}
\tikzstyle{dot}=[draw,fill=white,circle,inner sep=0pt,minimum size=4pt]
\tikzstyle{dot}=[draw,fill=white,circle,inner sep=0pt,minimum size=4pt]
\begin{document}

\title{On the well-possedness of time-dependent three-dimensional Euler fluid flows}\thanks{This work is supported by the project 20-11027X financed by Czech science foundation (GA\v{C}R). The authors are  members of the Ne\v{c}as Center for Mathematical Modeling.}
\author[M.~Bul\'{i}\v{c}ek]{Miroslav Bul\'i\v{c}ek}
\address{Charles University, Faculty of Mathematics and Physics, Mathematical Institute\\
Sokolovsk\'{a} 83, 186 75 Prague, Czech Republic}
\email{mbul8060@karlin.mff.cuni.cz}

\author[J. M\'{a}lek]{Josef M\'{a}lek}
\address{Charles University, Faculty of Mathematics and Physics, Mathematical Institute \\
Sokolovsk\'{a} 83, 186 75 Prague, Czech Republic}
\email{malek@karlin.mff.cuni.cz}

\begin{abstract}
    We study the mathematical properties of time-dependent flows of incompressible fluids that respond as an Euler fluid until the modulus of the symmetric part of the velocity gradient exceeds a certain, a-priori given but arbitrarily large, critical value. Once the velocity gradient exceeds this threshold, a dissipation mechanism is activated. Assuming that the fluid, after such an activation, dissipates the energy in a specific manner, we prove that the corresponding initial-boundary-value problem is  globally-in-time well-posed in the sense of Hadamard. In particular, we show that for an arbitrary, sufficiently regular, initial velocity there is a unique weak solution to the spatially periodic problem.
%Applying higher differentiability techniques in space and time, choosing a special weighted norm (equivalent to the Euclidean norm in $\mathbb{R}^d$), incorporating finer properties of integrable functions and {flux truncation techniques}, we prove long-time and large-data existence and uniqueness of weak solution, with an $L^1$-integrable flux, to an initial spatially-periodic problem for all values of a positive model parameter. If this parameter is smaller than $2/(d+1)$, where $d$ denotes the spatial dimension, we obtain higher integrability of the flux.
\end{abstract}
\keywords{Euler fluid, activated Euler fluid, three-dimensional flow, weak solution, existence, uniqueness, regularity method}
\subjclass[2000]{35K59, 35K92,35D30, 76D03}

\maketitle

%\section{Development of novel mathematical foundations for fluids and solids}

%Exploring novel mathematical frameworks for fluids and solids

%Advancement of new mathematical principles for fluids and solids

\section{Introduction}\label{sec:1}
%$$\pigl(\bigl(\left((A_B^C)\right)\bigr)\pigr)$$

\label{1.1} \subsection{Statement of the problem and main result}
A (nonlinear) \emph{dynamical system} of a schematic form
$$
\frac{\dd{u}}{\dd{t}} = F(u) \textrm{ in } (0,\infty), \qquad u (0) = u_0,
$$
is \emph{well-posed} in the sense of Hadamard if it admits {a} solution for all $t>0$, this solution is uniquely determined by its initial state $u_0$, and the solution is stable with respect to small perturbations of $u_0$. Towards the goal to establish well-possedness of any dynamical system, the  identification of \emph{the correct concept of solution} is of a fundamental importance.

In (incompressible) fluid mechanics, \emph{two-dimensional} internal flows governed by the \emph{Navier--Stokes equations} serve as an important dynamical system that is well-posed. More precisely, it is known that the initial-boundary-value problem governed by two-dimensional Navier--Stokes equations, when subject to standard boundary conditions such as no slip, Navier's slip, or complete slip, admit \emph{a unique weak solution},  see \cite{Leray34,Hopf51,Lad59,LionsProdi59}. %for the proof of the existence, \cite{lad59} for the proof of uniqueness, and \cite{Te01} for one of the earliest book expositions addressing mathematical properties of two-dimensional flows governed by the Navier--Stokes equations in detail. %and \cite{Tsai, BedrVicol} for two of the most recent.

In this context,  the \emph{concept of weak solution} is very natural for at least two reasons. First, the governing equations that stem from the application of  Newton's second law of classical mechanics to an arbitrary (open) subset of the domain occupied by the fluid lead to integral identities  that  are equivalent to the weak formulation of the governing equations. This weak formulation is thus a  primary  object while the classical pointwise formulation of the governing equations obtained under additional smoothness of the  quantities involved is, from this perspective, a secondary (derived) concept.\footnote{This resembles the analogous situation concerning the Euler--Lagrange equations in  the calculus of variations. A weak form of these equations is equivalent to the condition stating that the Gateaux derivative of the functional  under consideration vanishes  in each direction; as this necessary condition for the minimizer appears first, the weak formulation of the Euler--Lagrange equations is of a more fundamental nature than  their classical,  pointwise form.}  Secondly, in general, the only available information controlled by the initial total kinetic energy of the system leads to control of the total kinetic energy at any instant of time and of the overall energy dissipated from the system. This piece of information should be incorporated into the concept of a solution that one wishes to deal with. Note that for  the two-dimensional Navier--Stokes equations the information coming from this a~priori energy balance leads to the function spaces in which the weak solution is constructed.

However, for  the \emph{three-dimensional Navier--Stokes equations}, the question of global-in-time
well-posedness is open. More precisely,  a (suitable) weak solution is known to exist for any $t>0$,  see \cite{Leray34, Hopf51,CaKoNi82}, but its uniqueness is an open question.\footnote{In fact, recent results \cite{BCV22,ABC22} regarding the existence of infinitely many weak solutions to the initial-boundary-value problem governed by the Navier--Stokes equations indicate that such a well-posed theory for  the three-dimensional Navier--Stokes equations might not even be  attainable. See also \cite{OAL_nonunique}.} There are, however, three-dimensional dynamical systems describing incompressible fluid flows that are well-posed for initial data with finite total kinetic energy. The purpose of this study is to significantly \emph{enlarge the class of incompressible fluid flow problems that are well-posed}.

To be more specific, we recall that unsteady flows of incompressible fluids are governed by the following equations:\footnote{We omit the effect of external body forces for simplicity here. Also, the second equation (balance of linear momentum) is divided by the constant density; $p$ is then the kinematic pressure. Finally, we note that most researchers  in the field of fluid mechanics prefer to  write $(\bv \cdot \nabla)\bv$ instead of $[\nabla\bv]\bv$.}
\begin{equation} \label{zv_07}
    \di \bv = 0, \quad \partial_t{\bv} + [\nabla \bv]\bv - \di \mathbb{S} = - \nabla p,
\end{equation}
where $\bv=(v_1, v_2, v_3)$ is the velocity, $p$ is, in the context considered here, the mean normal stress (the pressure), and $\mathbb{S}$ is a part of the Cauchy stress $\mathbb{T}$ that enters  into an additional equation (the constitutive equation) that characterizes the inner frictional mechanisms of a particular class of fluids one  is dealing with. Note that  because of the incompressibility constraint $\di\bv=0$  one has that $\mathbb{T} = -p\mathbb{I}+ \mathbb{S}$.

Two of the most popular classes of incompressible fluids are \emph{Navier--Stokes fluids}, characterized by  a linear constitutive equation  relating $\mathbb{S}$ and the symmetric part of the velocity gradient $\mathbb{D}\bv:=\frac{1}{2}(\nabla \bv + (\nabla \bv)^{\mathrm{T}})$, i.e.,
\begin{align}
    \mathbb{S} := 2\nu \mathbb{D}\bv, \label{zv_11}
\end{align}
and \emph{Euler fluids}, satisfying
\begin{align}
    \mathbb{S} := \mathbb{O}. \label{zv_12}
\end{align}
In \eqref{zv_11}, $\nu$ is a positive constant representing the (kinematic) viscosity, while in \eqref{zv_12} $\mathbb{O}$ stands for the zero tensor (i.e., all  of its components vanish). See Fig.~\ref{zv_13} for the graphs of these responses.
\begin{figure}[]
    \centering
    \begin{tabular}{cc}
          \begin{tikzpicture}[scale=2.2]%
        \def\figm{1.1}
        \def\fignua{0.4}
        \def\fignub{0.2}
        \def\figr{6.5}
        \def\figmax{1.45}
        \def\figxmax{1.6}
        \footnotesize
        \draw[ultra thin] (-0.1,  0.0) -- (\figxmax, 0.0) node[below]{$|\mathbb{D}\bv|$};
        \draw[ultra thin] ( 0.0, -0.1) -- (0.0, 1.1) node[left ]{$|\mathbb{S}|$};
        \draw[variable=\t,smooth,ultra thick]
          plot[domain=0:\figxmax] ({\t},{\fignua*\t});
        \end{tikzpicture}
\qquad &      \qquad \begin{tikzpicture}[scale=2.2]%
        \def\figm{1.1}
        \def\figM{1.4}
        \def\figa{1.5}
        \def\figmax{1.34}
        \def\figxmax{1.8}
        \footnotesize
        \draw[ultra thin] (-0.1,  0.0) -- (\figxmax, 0.0) node[below]{$|\mathbb{D}\bv|$};
        \draw[ultra thin] ( 0.0, -0.1) -- (0.0, 1.1) node[left ]{$|\mathbb{S}|$};
        \draw[variable=\t,smooth,ultra thick]
          plot[domain=0:\figxmax] ({\t},{0});
        \end{tikzpicture}
    \end{tabular}
    \caption{Sketches of the responses, i.e., the constitutive equations, that characterize the Navier--Stokes fluids (left) and the Euler fluids (right).}
    \label{zv_13}
\end{figure}
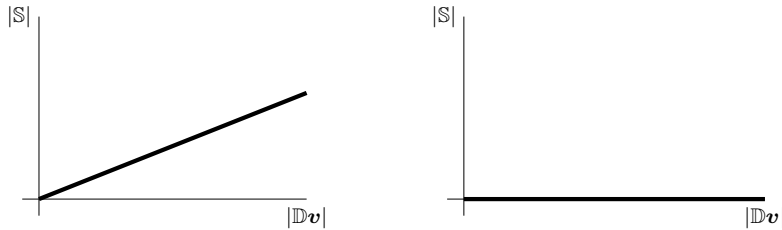

Let us also recall that for suitable boundary conditions (including no-slip, perfect slip, or the  spatially periodic problem), the following formal energy equality holds:
\begin{equation}
    \label{zv_15}
    \frac12 \int_{\Omega} |\bv(t, \cdot)|^2 + \int_0^t \int_{\Omega} \mathbb{S}\cdot\mathbb{D}\bv = \frac12\int_{\Omega} |\bv(0,\cdot)|^2,
\end{equation}
where $\Omega \subset \mathbb{R}^3$ is an open bounded set in which the internal flows (i.e., no inflows/outflows) take place.

We can now return to the question of identification of well-posed three-dimensional problems in incompressible fluid mechanics. This program has been initiated by Lady\v{z}enskaya, see \cite{OAL67}, where she proved that if extra dissipation  of a particular form is added to the Navier--Stokes model, then the problem is well-posed. More precisely,  by
considering
\begin{equation}
    \label{zv_16}
\mathbb{S}:=2\nu \mathbb{D}\bv+ 2\tilde\nu|\mathbb{D}\bv|^{r-2}\mathbb{D}\bv,
\end{equation}
she proved, see \cite{La72}, that the corresponding problem is well-posed if $r>5/2$. Note that \eqref{zv_16} reduces to the Navier--Stokes model if $\tilde\nu=0$ or $r=2$. This result has been generalized and extended in \cite{mnr93,mnrr96, BuEtKaPr10, BuKaPr19}  down to $r\ge 11/5$ under a different condition on the initial velocity.\footnote{This extension result holds for an initial velocity with finite total kinetic energy in the sense of trajectories; in the classical sense the problem is well-posed for initial states that are sufficiently regular.} Recently, see \cite{BuMaMa23}, this result has been extended, for the same range of $r$, to a constitutive equation of the form (see the left sketch in Fig.~\ref{zv_14}):
\begin{equation}
    \label{zv_16a}
\mathbb{S}:=\begin{cases}
        &2\nu\mathbb{D}\bv \hspace{5cm} \hfil \textrm{ if } 0\le |\mathbb{D}\bv| \le m,  \\
        &2\nu\mathbb{D}\bv + 2\tilde\nu(|\D\bv| - m)|\D\bv|^{r-3}\D\bv \qquad \,\,\,\textrm{ if } m< |\mathbb{D}\bv|,
    \end{cases}
\end{equation}
where $m>0$ is the activation that can be in principle so large that its precise value is irrelevant to  the particular fluid mechanics setting one is interested in. We call fluids  that respond according to \eqref{zv_16a} \emph{activated Navier--Stokes fluids}. Although the analysis of the initial-boundary-value problem associated with \eqref{zv_16a} does not  give rise to additional mathematical difficulties compared to the earlier studies cited above, from a modeling and computational point of view, the consequences are significant. For suitable numerical approximations of the initial-boundary-value problem associated with \eqref{zv_16a}, we know that the limiting object that is being computationally approximated is  the unique weak solution of a well-posed problem. Moreover, if for a sequence $(\bv_n)_{n=1}^\infty$ of numerical approximations, converging to the unique weak solution $\bv$ of the initial-boundary-value problem associated with \eqref{zv_16a}, the values of $|\D \bv_n|$ are less than or equal to the activation parameter $m$ for some $n \geq 1$, then $\bv_n$ can be viewed as an approximation to a weak solution of the three-dimensional Navier--Stokes equations.

% {\color{red} Moreover, if for a sequence $(\bv_n)_{n=1}^\infty$ of numerical approximations, converging to the unique weak solution $\bv$ of the initial-boundary-value problem associated with \eqref{zv_16a}, the values of $|\D \bv_n|$ are less than or equal to the activation parameter $m$, then $\bv_n$ is in fact an approximation to a weak solution of the three-dimensional Navier--Stokes equations.}

%{\color{blue} This is in contrast to three-dimensional Navier--Stokes equations as the recent results \cite{BCV22,ABC22} regarding the existence of infinitely many weak solutions to the initial-boundary-value problem indicate that such a well-posed theory for these equations may not be even available.}

%In \cite{BCV22}, Buckmaster et al. found non-unique distributional solutions, which are however not Leray–Hopf solutions. In \cite{ABC22}, Albritton et al. found an irregular (only $L^1$ function with respect to time variable) right hand side for which one has non-unique Leray–Hopf solution.

The question we aim to address affirmatively in this research is whether the same applies to three-dimensional flows  of \emph{Euler fluids}. We will demonstrate that there is an additional dissipation  mechanism in the model, triggered only at high values of the generalized shear rate $ \mathbb{D}\bv $, which ensures that initial-boundary-value problems  for such \emph{activated Euler fluids} are well-posed.

Three-dimensional Euler fluid flows are a major subject of mathematical research  because of the numerous challenges presented by the governing nonlinear PDE system. In this context, it was shown by Daneri, Runa, and Sz\'{e}kelyhidi \cite{DaRuSz21}, building on the convex integration method \cite{LeSz10}, that there exist ``wild" initial conditions that give rise to infinitely many weak solutions satisfying the energy inequality.  Moreover, the set of ``wild" initial conditions is dense within the space of initial velocity states that possess bounded total kinetic energy.

In this work, we offer an alternative perspective. We study a special class of activated Euler fluids of the form (see the sketch in Fig.~\ref{zv_14},  middle image, for  the corresponding graph)
\begin{equation}
    \label{zv_17}
\mathbb{S}:=\left\{\begin{aligned}
        &\mathbb{O} &&\textrm{ if } 0\le |\mathbb{D}\bv| \le m,  \\
        &\sigma\frac{(|\D\bv| - m)}{(M^a - |\D\bv|^a)^{\frac 1a}}\frac{\D\bv}{|\D\bv|} &&\textrm{ if } m< |\mathbb{D}\bv| < M,
    \end{aligned}
    \right.
\end{equation}
and show that for $\sigma>0$ and $a\in(0,1/2)$ the initial-boundary-value problem governed by such fluids is well-posed in the classical sense, for  a sufficiently regular, but  otherwise arbitrary, initial velocity. In addition, the solution satisfies the associated energy equality.

Before we start to formulate the result in a precise way, four remarks are in order. The first is of a general nature. The balance equations in continuum mechanics are derived under certain assumptions and are not meant to describe the fluid behaviour at (extremely) large values for the velocity, its derivative, or the forces. Above, we have modified the response of the classical fluid models (Euler and Navier--Stokes) only for extremely large values of $|\mathbb{D}\bv|$. When solving any particular problem in fluid mechanics involving Euler fluids, it can happen that (if the activation parameter $m$ is large enough) the activation does not take place at all. Then one would get the same flow (the same solution) when solving \eqref{zv_07} with \eqref{zv_17} as for the Euler fluid governed by \eqref{zv_07} and \eqref{zv_12}.

The second comment concerns the appearance of activated Euler fluids. Surprisingly, to the best of our knowledge, these  activated responses  appeared only as an outcome of a systematic classification of implicitly constituted fluids\footnote{The systematic investigation of the properties of \emph{implicitly constituted fluids and solids} was initiated in the studies by Rajagopal~\cite{Ra03, Ra06}.}, see~\cite{BlMaRa20}. For example, the fluid model that  responds as an Euler fluid prior to activation and  as a Navier--Stokes fluid once the activation  has taken  place is a dual\footnote{ This means that the activated Euler fluid model (which behaves as the Navier--Stokes fluid model after activation) is obtained by interchanging the role of $\mathbb{S}$ and $\mathbb{D}\bv$ in the constitutive equation for the Bingham fluid, see \cite{BlMaRa20, Ga-OrMaRa24} for more details.} model to the popular Bingham fluid model. While the Bingham fluid model is used in  a number of different engineering applications, the activated Euler  model has not been used in applications so far, although Prandtl's boundary layer theory or superfluidity  is based on  combining the Euler and Navier--Stokes fluid models. We refer to  the preliminary numerical studies \cite{Ga-OrMaRa24} in which the performance of the activated Euler fluid model is compared with the full Navier--Stokes model, for flows along  an airfoil. We also wish to mention that  the PDE analysis %(in the spirit of Leray--Hopf solution for the three-dimensional Navier--Stokes fluids)
of both steady and unsteady internal flows of several classes of activated Euler fluids (but not the one studied here) with different types of boundary conditions  developed in \cite{BlMaRa20} concerns  the existence of weak  solutions; however, for all  of the dynamical problems studied in \cite{BlMaRa20} the  question of uniqueness, and thereby also the question of well-posedness of the initial-boundary-value problems whose weak solution were shown there to exist, remains open. %Link to \cite{GP.2017B}

Third, although the value of $m>0$ in the response can be  arbitrary,  it is understood that  in the context herein it is taken to be large enough so that  for the specific physical situation of interest the equations for activated Euler fluids (\eqref{zv_07} with \eqref{zv_17}) have the potential to describe the same flow as the  equations for a standard Euler fluid. As   was stated above, one can have, however, situations where Euler and Navier--Stokes fluids  mix and the transition can be also triggered by the specific value  of $|\mathbb{D}\bv|$, say $\underline{m}$, that is supposed to be physically relevant (even small). Then, in order to   have a well-posed problem, one can consider the following fluid response (see also Fig.~\ref{zv_14},  right-most image, for the corresponding sketch):
\begin{equation}
    \label{zv_1717}
\mathbb{S}:=\left\{\begin{aligned}
        &\mathbb{O} &&\textrm{ if } 0\le |\mathbb{D}\bv| \le \underline{m},  \\
        &2\nu(|\mathbb{D}\bv|-\underline{m})\frac{\mathbb{D}\bv}{|\mathbb{D}\bv|} && \textrm{ if } \underline{m} < |\mathbb{D}\bv| \le m, \\
        &2\nu(|\mathbb{D}\bv|-\underline{m})\frac{\mathbb{D}\bv}{|\mathbb{D}\bv|} + \sigma\frac{(|\D\bv| - {m})}{(M^a - |\D\bv|^a)^{\frac 1a}}\frac{\D\bv}{|\D\bv|} &&\textrm{ if } m< |\mathbb{D}\bv| < M.
    \end{aligned}
    \right.
\end{equation}

Finally, it is worth observing that if $m=0$ in \eqref{zv_17} then the response is invertible and one has
$$
\mathbb{S} = \sigma\frac{\D\bv}{(M^a - |\D\bv|^a)^{\frac 1a}} \qquad \iff \qquad \mathbb{D}\bv = M\frac{\mathbb{S}}{(\sigma^a + |\mathbb{S}|^a)^{\frac 1a}}.
$$
When replacing $\mathbb{D}\bv$ by the linearized strain, one obtains the limiting strain model introduced by Rajagopal in \cite{krr2007, krr2010, krr2011}, see also \cite{bmrs2014}. The  analyses of  the corresponding boundary-value problems involving  those models have developed tools and results that we  can also employ for the class of problems explored in this study. We refer in particular to \cite{BeBuMaSu16} and \cite{BuHrMa23}.

In the next section, we introduce the problem and notation, give a definition of a weak solution, and formulate our main result. %For simplicity, we study a spatially periodic problem and formulate and prove the results only in three dimensions.
We also describe the structure of the remaining parts of the manuscript.

\begin{figure}[]
    \centering
    \begin{tabular}{ccc}
             \begin{tikzpicture}[scale=2.2]%
        \def\figm{0.8}
        \def\fignua{0.4}
        \def\fignub{0.2}
        \def\figr{6.5}
        \def\figmax{1.38}
        \def\figxmax{1.8}
        \footnotesize
        \draw[ultra thin] (-0.1,  0.0) -- (\figxmax, 0.0) node[below]{$|\mathbb{D}\bv|$};
        \draw[ultra thin] ( 0.0, -0.1) -- (0.0, 1.1) node[left ]{$|\mathbb{S}|$};
        \draw[variable=\t,smooth,ultra thick]
          plot[domain=0:\figm] ({\t},{\fignua*\t})
          --
          plot[domain=\figm:\figmax] ({\t},{\fignua*\t+\fignub*(\t-\figm)*\t^(\figr-2)});
        %\node[dot] at ({\figm},{\fignua*\figm}) {};
        \draw[ultra thin] (\figm,-0.03) node[below]{$m$} -- (\figm,0.03);
      \end{tikzpicture}
 \qquad &      \qquad   \begin{tikzpicture}[scale=2.2]%
        \def\figm{0.8}
        \def\figM{1.08}
        \def\figa{1.5}
        \def\figmax{1.02}
        \def\figxmax{1.8}
        \footnotesize
        \draw[ultra thin] (-0.1,  0.0) -- (\figxmax, 0.0) node[below]{$|\mathbb{D}\bv|$};
        \draw[ultra thin] ( 0.0, -0.1) -- (0.0, 1.1) node[left ]{$|\mathbb{S}|$};
        \draw[variable=\t,smooth,ultra thick]
          plot[domain=0:\figm] ({\t},{0})
          --
          plot[domain=\figm:\figmax] ({\t},{(\t-\figm)/(\figM^\figa-\t^\figa)^(1/\figa)});
        \draw[ultra thin] (\figm,-0.06) node[below]{$m$} -- (\figm,0);
        \draw[ultra thin,densely dashed] (\figM, -0.06) node[below]{$M$} -- (\figM, 1.1);
      \end{tikzpicture}
    \qquad &      \qquad   \begin{tikzpicture}[scale=2.2]%
        \def\fignua{0.4}
        \def\figm{0.55}
        \def\figmbar{1.2}
        \def\figM{1.48}
        \def\figa{1.6}
        \def\figmax{1.42}
        \def\figxmax{1.8}
        \footnotesize
        \draw[ultra thin] (-0.1,  0.0) -- (\figxmax, 0.0) node[below]{$|\mathbb{D}\bv|$};
        \draw[ultra thin] ( 0.0, -0.1) -- (0.0, 1.1) node[left ]{$|\mathbb{S}|$};
        \draw[variable=\t,smooth,ultra thick]
          plot[domain=0:\figm] ({\t},{0})
          --
          plot[domain=\figm:\figmbar] ({\t},{(\t-\figm)*\fignua})
          --
          plot[domain=\figmbar:\figmax] ({\t},{(\figmbar-\figm)*\fignua + (\t-\figmbar)/(\figM^\figa-\t^\figa)^(1/\figa)});
        %\node[dot] at ({\figmbar},{\fignua*(\figmbar-\figm)}) {};
        \draw[ultra thin] (\figm,-0.06) node[below]{$\underline{m}$} -- (\figm,0);
        \draw[ultra thin] (\figmbar,-0.06) node[below]{$m$} -- (\figmbar,0);
        \draw[ultra thin,densely dashed] (\figM, -0.06) node[below]{$M$} -- (\figM, 1.1);
      \end{tikzpicture}
    \end{tabular}
    \caption{Sketches of the constitutive equations. The figure  on the left characterizes the activated Navier--Stokes fluids described in \eqref{zv_16a}. The  middle figure reflects the response of the activated Euler fluids described in \eqref{zv_17}. The figure  on the right vizualizes the Euler fluid with two activations described in \eqref{zv_1717}.}
    \label{zv_14}
\end{figure}
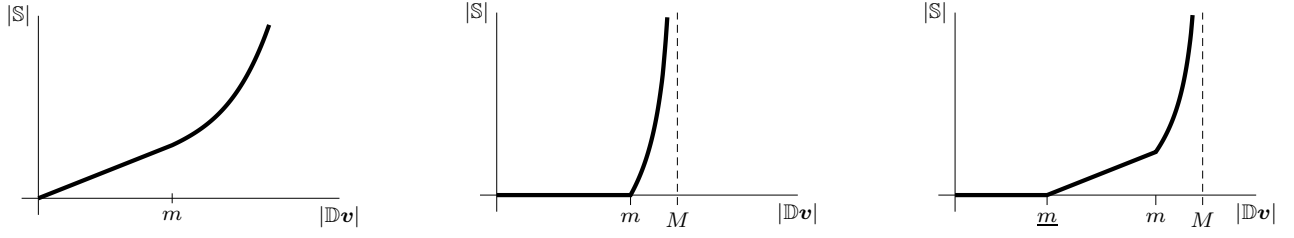

\section{Formulation of the problem and of the main result} \label{sec:2}

Let $L$, $T$, $\sigma$, $a$, $m$ and $M$ be fixed positive parameters (constants); while $L$, $T$ and $\sigma$ are arbitrary and $L$ specifies the periodic cell $\Omega:=(0,L)^3$ and $T$ a corresponding time-space cylinder $Q:=(0,T)\times\Omega$. The parameters $m$ and $M$ are assumed to satisfy
\begin{align}
   0<m<M-2<\infty. \label{ox1}
\end{align}
 A further restriction,  in terms of an upper bound on  the parameter $a$, will be stated later. Note that it would be sufficient to require in \eqref{ox1} that $0<m<M<\infty$, but then some minor technical issues regarding the construction of the approximation problems  to be considered have to be  dealt with more carefully.

With this set of parameters, we consider the following problem: for an arbitrarily  given \mbox{$\Omega$-periodic} function $\bv_0:\R^3\to\R^3$, ind an $\Omega$-periodic function $\bv=(v_1, v_2, v_3):(0,T)\times \R^3\to\R^3$, and a scalar function $p:[0,T]\times \R^3\to\R$ with $\Omega$-periodic gradient such that
\begin{subequations}
\label{zadani_classic}
\begin{align}
    \label{rovnice_classic}
    \di \bv = 0, \qquad \partial_t \bv &+ [\nabla\bv]\bv - \di  \mathbb{S}(\mathbb{D}\bv) = -\nabla p &&\textrm{in }Q,\\
    \label{constit_classic}
    \mathbb{S}(\mathbb{D}\bv)&:=\left\{\begin{aligned}
        &\mathbb{O} &&\textrm{ if } 0\le |\mathbb{D}\bv| \le m, \\
        &\sigma\frac{(|\D\bv| - m)}{(M^a - |\D\bv|^a)^{\frac 1a}}\frac{\D\bv}{|\D\bv|} &&\textrm{ if } m< |\mathbb{D}\bv| < M
    \end{aligned}\right. &&\textrm{in }Q,
        \\
    \label{initial_classic}
\bv(0, \cdot)&=\bv_0&& \textrm{in }\Omega.
%\\[4pt] \label{periodic_classic}
%u,\,\vect{q}& &&\textrm{are $\Omega$-periodic}.
\end{align}
\end{subequations}
Here, $\partial_t\bv$ and $\partial_{x_k}\bv$ denote the partial derivative of $\bv$ with respect to time $t$ and the spatial variable $x_k$; the operator $\nabla = (\partial_{x_1}, \partial_{x_2}, \partial_{x_3})$ stands for the gradient and the $i$-th component of $\di \mathbb{S}$ is specified by the relation $(\di \mathbb{S})_{i}:=\sum_{j=1}^3 \partial_{x_j} \mathbb{S}_{ij}$;  as in the previous section, the symbol
$\mathbb{D}\vect{v}:= \tfrac12 \left( \nabla \vect{v} + (\nabla \vect{v})^{\mathrm{T}} \right)$ denotes the symmetric part of the velocity gradient. %We also use summation convection.

%{\color{blue} Uplne nejsem nadsen ze sumacni konvence u rovnic, ale preziju to. Operator $\nabla$ definovany vyse je jste ok, ale ta divergence neni dobre - kazda derivace derivuje jinou komponentu, ciz z te definice je nejasne}

\medskip

\noindent The main result of this paper is the following.

\begin{mdframed}[%
linecolor=white,leftmargin=60,%
rightmargin=60,
backgroundcolor=gray!30,%
innertopmargin=5pt,%
ntheorem]

\emph{Assume that $L$, $T$, $\sigma$, $a$, $m$ and $M$  are arbitrary positive parameters satisfying \eqref{ox1} and $0< a < 1/2$. Then, for  an arbitrary initial velocity $\bv_0$ satisfying
$$
\sup_{x\in\Omega} |\D\bv_0(x)|<M,
$$
there exists a unique weak solution $\bv$ to problem~\eqref{zadani_classic}.}

\end{mdframed}

\medskip

Before giving a precise definition of the weak solution to \eqref{zadani_classic}, we fix the notation  for the appropriate function spaces. Since we are dealing with a spatially periodic problem, the Sobolev spaces of $\Omega$-periodic  functions are defined as follows:
\begin{equation*}\label{periodic_def}
W_{per}^{k,p}(\Omega):=\overline{\left\{u=\tilde{u}_{\big|\Omega},\,\tilde{u}\in C^{\infty}(\R^3)\text{~is~}\Omega\text{-periodic} \right\}}^{\Vert\cdot\Vert_{k,p}},
\end{equation*}
where $k\in \mathbb{N}_0$ and $p\in[1,\infty)$ are arbitrary. Note that $L^p(\Omega) = W^{0,p}_{per}(\Omega)$. The space $W^{k,\infty}_{per}$ is then defined as
$$
W^{k,\infty}_{per}(\Omega):=W^{k,2}_{per}(\Omega)\cap W^{k,\infty}(\Omega).
$$

Sobolev spaces of vector-valued divergence-free $\Omega$-periodic functions are then defined in the following way:
$$
    W^{k,p}_{per,div}:=\{\vect{u}=(u_1, u_2, u_3);~ \di\vect{u} = 0 \textrm{ and } u_i\in W_{per}^{k,p}(\Omega) \textrm{ for } i=1,2,3\}.
$$
We use the symbols $\|\cdot\|_p$ and $\|\cdot\|_{k,p}$ to denote the usual norms in Lebesgue and Sobolev spaces.

Bold letters, e.g. $\vect{u}$, and blackboard bold letters, e.g. $\mathbb{S}$, are used for vector-valued and tensor-valued functions,  respectively, to distinguish them from scalar functions. If  two vector-valued functions $\vect{f}$, $\vect{g}$ are such that $\vect{f}\cdot\vect{g}\in L^1(\Omega)$, then $(\vect{f},\vect{g}):=\int_{\Omega} \vect{f}\cdot\vect{g}$; similarly, if two tensor-valued functions $\mathbb{S}$, $\mathbb{D}$ are such that $\mathbb{S}\cdot\mathbb{D}\in L^1(\Omega)$, then $(\mathbb{S},\mathbb{D}):=\int_{\Omega} \mathbb{S}\cdot\mathbb{D}$.

The shortcut ``a.e." abbreviates \emph{almost everywhere} and ``a.a." stands for \emph{almost all}. The symbols $|x|$, $x_{+}$, and $x_{-}$ denote the absolute value, the positive part, and the negative part of a real number $x$ so that $x=x_{+} - x_{-}$ and $|x|= x_{+}+x_{-}$. If  a set $A\subset \mathbb{R}^3$ is measurable, then $|A|$ stands for the Lebesgue measure of $A$.
Finally, if $\mathbb{D}=(D_{ij})_{i,j=1}^3$, then $|\mathbb{D}|:=\left(\sum_{i,j=1}^3 D_{ij}^2\right)^{1/2}$. Clearly, the symbol $|\cdot|$ has several different meanings, but its  particular choice should be clear from the  specific context.

Next, we define the notion of a \textit{weak solution} to~\eqref{zadani_classic} and formulate  our main result.
\begin{df}\label{weak_formulation}
Let $L$, $T$, $\sigma$, $a$, $m$ and $M$ be fixed positive parameters satisfying \eqref{ox1},  and let $\bv_0\in L^2_{ per,div}$. We say that $\bv$ is a weak solution to problem~\eqref{zadani_classic} if
\begin{align}
\bv&\in W^{1,2}\left(0,T; L^2(\Omega)\right)\cap L^2\left(0,T; W^{1,2}_{per,div}\right), \label{ox_ws1}\\
\mathbb{S}(\mathbb{D}\bv)&\in L^1(Q), \label{ox_ws2}\\
\|\mathbb{D}\bv\|_{\infty} &< M, \label{ox_ws3}
\end{align}
and
\begin{subequations}\label{flux-integrable_weak_formulation}
\begin{align}
       \begin{split}
       (\partial_t \bv, \vect{\varphi}) &-( \bv\otimes\bv, \mathbb{D}\vect{\varphi}) + (\mathbb{S}(\mathbb{D}\bv) ,\mathbb{D} \vect{\varphi}) =0 \\
       &\qquad\qquad \textrm{for all } \vect{\varphi}\in W^{1,2}_{per,div} \textrm{ with } \mathbb{D}\vect{\varphi}\in L^{\infty}(Q) \textrm{ and a.a. } t\in (0,T),
    \end{split}\label{rovnice}
    \\
    \mathbb{S}(\mathbb{D}\bv)&= \sigma\frac{(|\D\bv| - m)_{+}}{(M^a - |\D\bv|^a)^{\frac 1a}}\frac{\D\bv}{|\D\bv|} \textrm{ a.e. in }Q,\label{constit_old}
    \\
     \lim_{t\to 0_{+}} \Vert \bv(t,\cdot) &-\bv_0\Vert_2=0. \label{initial}
\end{align}
\end{subequations}
\end{df}
Note that it follows from \eqref{ox_ws1}--\eqref{ox_ws3} that $\bv$ is an admissible test function in \eqref{rovnice}. Consequently, the energy equality \eqref{zv_15}, with $\mathbb{S}$ given by \eqref{constit_old}, holds.

\begin{thm}\label{main_thm}
Let $L$, $T$, $\sigma$, $a$, $m$ and $M$ be fixed positive parameters satisfying \eqref{ox1} and $0<a<1/2$. Assume that $\bv_0\in W_{per,div}^{1,2}$ satisfies
\begin{equation}\label{flux_small_data}
    \|\D\bv_0\|_{\infty}<m.
\end{equation}
Then, there exists a unique weak solution to problem~\eqref{zadani_classic} in the sense of Definition \ref{weak_formulation} additionally satisfying
\begin{align}
\mathbb{S}(\mathbb{D}\bv)&\in L^{2(1-a)}(Q).\label{ox4}
\end{align}
\end{thm}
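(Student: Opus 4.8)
\emph{Proof idea.} The plan is to obtain the solution as the limit of a family of regularized problems and to establish uniqueness by a direct energy/monotonicity argument. For $\varepsilon\in(0,1)$ I would replace $\mathbb S$ by a globally defined, $C^1$, strictly monotone map $\mathbb S^\varepsilon\colon\R^{3\times3}_{\mathrm{sym}}\to\R^{3\times3}_{\mathrm{sym}}$ of the form $\mathbb S^\varepsilon(\mathbb D)=g^\varepsilon(|\mathbb D|)\,\mathbb D/|\mathbb D|$ that coincides with $\mathbb S$ on $\{|\mathbb D|\le M-\varepsilon\}$ and is extended for $|\mathbb D|>M-\varepsilon$ as an increasing function with, say, cubic growth, and then add the artificial dissipation $\varepsilon|\mathbb D\bv^\varepsilon|^{q-2}\mathbb D\bv^\varepsilon$ with $q$ fixed and large. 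The resulting system is a generalized Navier--Stokes system for a fluid with a strictly monotone, $q$-growth, shear-rate--dependent stress, for which the now-classical theory --- Ladyženskaya's regularity programme and its descendants, see \cite{La72,mnrr96,BuEtKaPr10} --- yields, after an additional Galerkin or mollification step for the convective term, a unique weak solution $\bv^\varepsilon$ that is smooth enough to justify the manipulations below; the margin $M-m>2$ in \eqref{ox1} is used only to make the construction of the $\mathbb S^\varepsilon$'s and of the approximation scheme transparent. Denoting by $\Phi^\varepsilon$ the potential with $\partial_{\mathbb D}\Phi^\varepsilon=\mathbb S^\varepsilon$ and $\Phi^\varepsilon(\mathbb O)=0$, I note that $\Phi^\varepsilon$ vanishes on $\{|\mathbb D|\le m\}$ and that $\Phi^\varepsilon(\mathbb D)\sim c\,(M-|\mathbb D|)^{\,1-1/a}\to+\infty$ as $|\mathbb D|\to M$, since $1-1/a<-1$ for $a<1/2$; because $\|\mathbb D\bv_0\|_\infty<m$ by \eqref{flux_small_data}, the approximations carry zero initial ``activation energy'', $\Phi^\varepsilon(\mathbb D\bv_0)=0$.

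Next I would derive bounds independent of $\varepsilon$. The energy equality gives $\bv^\varepsilon$ bounded in $L^\infty(0,T;L^2)\cap L^2(0,T;W^{1,2}_{per,div})$, $\varepsilon^{1/q}\nabla\bv^\varepsilon$ bounded in $L^q(Q)$, and $\int_Q\mathbb S^\varepsilon(\mathbb D\bv^\varepsilon)\cdot\mathbb D\bv^\varepsilon\le\tfrac12\|\bv_0\|_2^2$; since $\mathbb S^\varepsilon(\mathbb D)\cdot\mathbb D\ge m\,|\mathbb S^\varepsilon(\mathbb D)|$ for all $\mathbb D$, this already bounds $\mathbb S^\varepsilon(\mathbb D\bv^\varepsilon)$ in $L^1(Q)$. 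The heart of the matter is to upgrade this to an $L^\infty$ bound on $\mathbb D\bv^\varepsilon$ that keeps it \emph{uniformly} away from $M$. For this I would test the momentum balance with spatial second differences of $\bv^\varepsilon$ (equivalently, differentiate the equation and test with $\partial_k\bv^\varepsilon$), exploiting that $\partial_{\mathbb D}\mathbb S$ is of order $(M-|\mathbb D|)^{-1-1/a}$ near $|\mathbb D|=M$, so that the quasilinear diffusion becomes extremely coercive precisely where $|\mathbb D\bv^\varepsilon|$ threatens to approach $M$; combined with the bound $\sup_t\int_\Omega\Phi^\varepsilon(\mathbb D\bv^\varepsilon(t))\le C$ obtained by testing with $\partial_t\bv^\varepsilon$ (whose right-hand side is finite thanks to $\Phi^\varepsilon(\mathbb D\bv_0)=0$ and controlled via the convective term using the gradient bound being bootstrapped), one feeds the integrability $(M-|\mathbb D\bv^\varepsilon|)^{-1}\in L^{1/a-1}(Q)$ through the Sobolev embedding $W^{1,2}\hookrightarrow L^6$ in $\R^3$ and iterates (a Moser/De Giorgi scheme) to reach $\|\mathbb D\bv^\varepsilon\|_{L^\infty(Q)}\le M-\delta$, hence also $\mathbb S^\varepsilon(\mathbb D\bv^\varepsilon)=\mathbb S(\mathbb D\bv^\varepsilon)$ bounded in $L^\infty(Q)$, $\varepsilon|\mathbb D\bv^\varepsilon|^{q-2}\mathbb D\bv^\varepsilon\to\mathbb O$ in $L^\infty(Q)$, $\|\nabla\bv^\varepsilon\|_{L^\infty(Q)}\le C$, and $\partial_t\bv^\varepsilon$ bounded in $L^2(Q)$ --- all with $\delta>0$ and $C$ depending only on $\|\bv_0\|_{1,2}$, $m$, $\|\mathbb D\bv_0\|_\infty$, $M$, $a$, $\sigma$, $L$, $T$. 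The exponent in \eqref{ox4}, $2(1-a)>1$, is exactly the integrability of $\mathbb S^\varepsilon(\mathbb D\bv^\varepsilon)$ that drops out of one such Sobolev step before the full iteration, and it survives passage to the limit. \emph{This $L^\infty$/regularity estimate is the main obstacle}, and it is here that the restriction $a<1/2$ (and \eqref{ox1}) is genuinely used; tools from the analysis of limiting-strain models, e.g.\ \cite{BeBuMaSu16,BuHrMa23}, should be adaptable to carry it out.

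From the uniform bounds I extract a subsequence with $\bv^\varepsilon\rightharpoonup\bv$ in $L^2(0,T;W^{1,2}_{per,div})$, $\partial_t\bv^\varepsilon\rightharpoonup\partial_t\bv$ in $L^2(Q)$, $\bv^\varepsilon\to\bv$ strongly in $C([0,T];L^2)$ and in every $L^s(Q)$ with $s<\infty$ (Aubin--Lions together with the $L^\infty$ gradient bound), and $\mathbb S^\varepsilon(\mathbb D\bv^\varepsilon)\overset{*}{\rightharpoonup}\overline{\mathbb S}$ in $L^\infty(Q)$. Passing to the limit in the weak formulation of the approximate problem yields \eqref{rovnice} with $\overline{\mathbb S}$ in place of $\mathbb S(\mathbb D\bv)$, while $\|\mathbb D\bv\|_\infty\le M-\delta$ gives \eqref{ox_ws3}, \eqref{ox_ws1} holds by construction, and \eqref{initial} follows from $\bv\in C([0,T];L^2)$. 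Since all $\mathbb S^\varepsilon$ agree with the monotone map $\mathbb S$ on $\{|\mathbb D|\le M-\delta\}$, Minty's monotonicity trick identifies $\overline{\mathbb S}=\mathbb S(\mathbb D\bv)$ a.e.\ in $Q$ (strict monotonicity of $\mathbb S$ on the activated region additionally yields $\mathbb D\bv^\varepsilon\to\mathbb D\bv$ strongly there), giving \eqref{constit_old}, \eqref{ox_ws2} and \eqref{ox4}; testing \eqref{rovnice} with $\bv$ itself --- admissible by \eqref{ox_ws1}--\eqref{ox_ws3} --- produces the energy equality. For uniqueness, let $\bv_1$ be any weak solution in the sense of Definition~\ref{weak_formulation} and $\bv_2$ the solution just constructed, and set $\bw:=\bv_1-\bv_2$; testing the difference of the two weak formulations with $\bw$ (legitimate by \eqref{ox_ws1}--\eqref{ox_ws3}) and using $\di\bv_1=0$ to discard the term $((\bv_1\cdot\nabla)\bw,\bw)$ gives $\tfrac12\ddt\|\bw\|_2^2+(\mathbb S(\mathbb D\bv_1)-\mathbb S(\mathbb D\bv_2),\mathbb D\bw)=-((\bw\cdot\nabla)\bv_2,\bw)\le\|\nabla\bv_2(t)\|_\infty\|\bw\|_2^2$. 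The stress term is non-negative by monotonicity of $\mathbb S$ on $\{|\mathbb D|<M\}$, and $\nabla\bv_2\in L^\infty(Q)$ by the previous step, so Gronwall's lemma with $\bw(0)=0$ forces $\bv_1=\bv_2$, which completes the proof.
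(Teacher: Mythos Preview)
Your overall architecture---regularize, derive $\varepsilon$-independent estimates, pass to the limit via a Minty argument, then uniqueness by Gronwall---is correct and is the paper's scheme as well. The substantive divergence, and the gap in your sketch, is the central regularity step.

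You aim for a uniform bound $\|\mathbb D\bv^\varepsilon\|_{L^\infty(Q)}\le M-\delta$ by a Moser/De~Giorgi iteration, which you yourself flag as ``the main obstacle'' and do not carry out. The paper does \emph{not} establish such a bound, and Theorem~\ref{main_thm} does not assert it; one only gets $|\mathbb D\bv|<M$ a.e., with no uniform gap, and $\mathbb S(\mathbb D\bv)$ is not shown to lie in $L^\infty$. The mechanism actually used is both simpler and different in kind: testing the (approximate) momentum equation with $-\Delta\bv$ and, after differentiating in time, with $\partial_t\bv$ gives $L^1(Q_{\delta,T})$ control of the quantity $\mathcal Q(\mathbb D\bv,\partial\mathbb D\bv):=\partial\mathbb S(\mathbb D\bv)\cdot\partial\mathbb D\bv$ for $\partial\in\{\nabla,\partial_t\}$. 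An elementary pointwise computation (see \eqref{ox_24} in the Appendix) shows that on the activated region
\[
\frac{|\nabla_{t,x}\mathbb S|^2}{(1+|\mathbb S|)^{1+a}}\ \le\ C\,\mathcal Q(\mathbb D\bv,\nabla_{t,x}\mathbb D\bv),
\]
equivalently $\nabla_{t,x}\bigl((1+|\mathbb S|)^{(1-a)/2}\bigr)\in L^2(Q_{\delta,T})$. A \emph{single} application of the four-dimensional Sobolev embedding $W^{1,2}(Q)\hookrightarrow L^4(Q)$ then yields $(1+|\mathbb S|)^{1-a}\in L^2$, i.e.\ $\mathbb S\in L^{2(1-a)}(Q_{\delta,T})$ uniformly in $\varepsilon$. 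The restriction $a<1/2$ enters exactly here, as $2(1-a)>1$, so that one beats the trivial $L^1$ bound on $\mathbb S$ and can extract a weakly convergent subsequence for the Minty step; no iteration is performed and no $L^\infty$ bound on $\mathbb D\bv$ away from $M$ is needed. Your proposed route would be strictly stronger if it worked, but as written it is the unproved core of your argument, and the bootstrapping you allude to (the $\Phi^\varepsilon$ bound feeding back into the gradient bound) is not closed.

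One minor point on uniqueness: since $(\bv_1-\bv_2)\otimes(\bv_1-\bv_2)$ is symmetric, the convective remainder equals $\int_\Omega\bigl((\bv_1-\bv_2)\otimes(\bv_1-\bv_2)\bigr)\cdot\mathbb D\bv_2$, so only $\|\mathbb D\bv_2\|_\infty<M$ is needed---and that is already part of Definition~\ref{weak_formulation}. Uniqueness therefore holds for \emph{any} two weak solutions, not merely in weak--strong form, and you do not need the extra regularity $\nabla\bv_2\in L^\infty$ coming from your claimed $L^\infty$ step.
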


For clarity, we restrict ourselves to the three-dimensional setting, although the results  hold in any  number of space dimensions $d \geq 2$  (for $a\in (0,2/(d+1))$), including planar flows $d=2$. Also, only minor modifications give the same result if one replaces \eqref{constit_classic} by the constitutive relation \eqref{zv_1717}. As our main goal is to show that Euler fluids can be activated in such a way that the resulting model generates well-posed problems, we do not consider \eqref{zv_1717} in our analysis.

The proof is structured in the following way. In the next section,  Section ~\ref{sec:3}, we establish the  asserted uniqueness result. The identity that is satisfied by the $L^2$-norm of the difference of two  weak solutions, $\bv^1$ and $\bv^2$,  that stem from two different initial values, $\bv_0^1$ and $\bv_0^2$, motivates the requirement that the symmetric part of the velocity gradient of the solution  be a~priori bounded. Sections~\ref{sec:4} and \ref{sec:5} concern the  asserted existence result. In Section~\ref{sec:4}, we introduce a suitable $\varepsilon$-approximation of the problem~\eqref{zadani_classic},  which is then treated by the standard Faedo--Galerkin method in combination with a cascade of energy estimates that  help to establish the existence of a weak solution to the $\varepsilon$-approximation for arbitrary fixed $\varepsilon\in (0,1)$.
Finally, we derive and summarize the whole cascade of estimates that are uniform with respect to $\varepsilon$. Then, in Section \ref{sec:5}, letting $\varepsilon\to 0_{+}$, we identify a weak solution of the original problem.

The article ends with some concluding remarks, the bibliography and  an Appendix containing some properties of the function $\mathbb{S}(\cdot)$  that characterizes the response of the  fluid, and its approximations $\mathbb{S}_n(\cdot)$ that are used in the proof of the main result.

\section{Proof of uniqueness}\label{sec:3}

\noindent In this short section, we shall prove that there is at most one weak solution to the problem \eqref{zadani_classic}.

Let us assume that there are two weak solutions $\bv_1$ and $\bv_2$ to Problem \eqref{zadani_classic} with the same initial value $\bv_0\in L^2(\Omega)$. Both weak solutions are meant in the sense of Definition~\ref{weak_formulation}. Note that $\bv_1$ and $\bv_2$ are admissible test functions in \eqref{rovnice}, having a time derivative in $L^2(Q)$. Subtracting \eqref{rovnice} for $\bv_2$ from the same equation for $\bv_1$ and taking $\vect{\varphi}=\bv_1(t,\cdot)-\bv_2(t,\cdot)$ as a test function, we obtain (for a.a. for a.a. $t\in(0,T)$)
\begin{equation}\label{dec:4}
\frac12 \ddt \|\bv_1-\bv_2\|_{2}^2 + \int_{\Omega} \left(\mathbb{S}(\mathbb{D}\bv_1)-\mathbb{S}(\mathbb{D}\bv_2)\right)\cdot \left(\mathbb{D}\bv_1-\mathbb{D}\bv_2\right) \dd x = \int_{\Omega} (\bv_1-\bv_2)\otimes(\bv_1-\bv_2)\cdot \mathbb{D}\bv_2\,.
\end{equation}
The second term in \eqref{dec:4}, representing a monotone operator (see Lemma~\ref{monotone} in Appendix~\ref{Appendix}), is non-negative and we neglect it (as it cannot help in controlling the right-hand side of \eqref{dec:4}). Using however \eqref{ox_ws3} we conclude that
\begin{equation}\label{ox5}
   \frac12 \ddt \|\bv_1-\bv_2\|_{2}^2 \le M \|\bv_1-\bv_2\|_{2}^2.
\end{equation}
Setting $y(t):=\|\bv_1(t,\cdot)-\bv_2(t,\cdot)\|_{2}^2$, we conclude from \eqref{ox5} that
$$
y(t) \le e^{2Mt} y(0).
$$
As $y(0)=0$, this completes the proof of uniqueness. Note that the same argument gives the stability of the solution with respect to (small) perturbations of the initial velocity. Hence, the well-possedness of the evolution problem \eqref{zadani_classic} will be completed once we prove that it admits global-in-time existence of  a weak solution in the sense of Definition~\ref{weak_formulation}. The part of the proof asserting the existence of a weak solution constitutes the rest of the paper.

\section{\texorpdfstring{$\varepsilon$}{e}-approximations and their properties}\label{sec:4}

\noindent In this section, we introduce, for any $\varepsilon\in (0,1)$, an $\varepsilon$-approximation of the problem \eqref{zadani_classic} and show, by means of the  Faedo--Galerkin method and regularity techniques performed at the Galerkin level, that this  $\varepsilon$-approximation admits a unique weak solution with second spatial and first  time derivatives   contained in $L^2(Q)$.

For $\varepsilon\in (0,1)$, the $\varepsilon$-approximation of the problem \eqref{zadani_classic}  considered consists of adding a regularizing non-linear elliptic operator to  the balance of linear momentum  equation and regularizing the initial velocity $\vect{v}_0$ by  means of a standard mollifier\footnote{For $\vect{u}\in L^{2}_{div, per}$ (or smoother), we set $\vect{u}_{\varepsilon}:=\vect{u}*\omega_{\varepsilon}:=\int_{\mathbb{R}^3} \omega_{\varepsilon}(\cdot - y) \vect{u}(y) dy$ where $\omega_{\varepsilon}(x)=\varepsilon^{-3}\omega({|x|/\varepsilon})$ and $\omega$ is a non-negative smooth real-valued function having the compact support in $(-1,1)$ satisfying $\int_{\mathbb{R}} \omega = 1$.} denoted $\bv_{0\varepsilon}$.

Let $a>0$. We say that a couple of functions $(\bv,p)=(\bv^{\varepsilon}, p^{\varepsilon})$ solves the \mbox{$\varepsilon$-approximation} of the original problem \eqref{zadani_classic} if
\begin{subequations}\label{zadani_eps}
\begin{align}
    \label{rovnice_classic_eps}
    \di \bv = 0, \qquad \partial_t \bv + \di (\bv \otimes \bv) - \di  \mathbb{S} &= -\nabla p + \varepsilon \di \left((1+|\mathbb{D}\bv|^2)\mathbb{D}\bv\right) &&\textrm{in }Q,\\
    \label{constit_classic_eps}
    \mathbb{S}={\mathbb{S}}(\mathbb{D}\bv) &:=\sigma\frac{(|\D\bv| - m)_{+}}{(M^a - |\D\bv|^a)^{\frac 1a}}\frac{\D\bv}{|\D\bv|} &&\textrm{in }Q,\\
    \label{initial_classic_eps}
\bv(0, \cdot)&=\bv_{0\varepsilon}&& \textrm{in }\Omega,
\end{align}
\end{subequations}
and $\bv$, $\mathbb{S}$ and $\nabla p$ are $\Omega$-periodic. In order to  abbreviate the notation,   without loss of generality we set $\sigma = 1$ in what follows.

In accordance with the assumptions of Theorem \ref{main_thm}, we assume that  $\bv_0 \in W^{1,2}_{per,div}$ satisfies $\|\mathbb{D}\bv_0\|_{\infty} < m$.  Thanks to the spatial periodicity of the studied problem and the properties of the mollifier, we observe that, for any $p\in(1,\infty)$,  one has that
\begin{equation}
    \label{kk1} \|\mathbb{D}\bv_{0\varepsilon}\|_p = \|(\mathbb{D}\bv_{0})_{\varepsilon}\|_p \le \|\mathbb{D}\bv_{0}\|_p.
\end{equation}
Consequently,
\begin{equation}
    \label{kk2} \|\mathbb{D}\bv_{0\varepsilon}\|_{\infty} =\lim_{p\to\infty} \|\mathbb{D}\bv_{0\varepsilon}\|_p \le  \lim_{p\to\infty}\|\mathbb{D}\bv_{0}\|_p = \|\mathbb{D}\bv_0\|_\infty < m,
\end{equation}
and $\bv_{0\varepsilon}\in W^{k,2}_{per,div}$, $k\in \mathbb{N}$,  satisfies the same bound as $\bv_0$, namely $\|\mathbb{D}\bv_{0\varepsilon}\|_{\infty}<m$.

We say that $\bv = \bv^{\varepsilon}$ is a \emph{weak solution to \eqref{zadani_eps}} if
\begin{align}\label{dec:34}
&\bv \in L^2\left(0,T; W^{2,2}_{per} \cap W^{1,2}_{per,div} \right),\qquad \partial_t \bv \in L^2\left(Q\right), \qquad \|\mathbb{D}\bv\|_{\infty, Q}\le M, \qquad \mathbb{S}(\D\bv)\in L^1(Q),
\\
&\lim_{t\to 0_{+}}\Vert \bv(t,\cdot)-\bv_{0\varepsilon} \Vert_2=0,\label{wf-in}
\end{align}
and
\begin{equation}
    \begin{split}
       (\partial_t \bv,\vect{\varphi}) - (\bv\otimes\bv,\mathbb{D}\vect{\varphi}) + ({\mathbb{S}}(\mathbb{D}\bv), \mathbb{D}\vect{\varphi}) &+ \varepsilon ((1+|\mathbb{D}\bv|^2) \mathbb{D}\bv,\mathbb{D}\vect{\varphi})= 0 \\ &\qquad \textrm{ for all } \vect{\varphi}\in W^{1,\infty}_{per,div} \textrm{ and a.a. } t\in (0,T).
    \end{split}\label{wf-rovnice}
\end{equation}
Uniqueness of such a solution follows from the same argument as in Section \ref{sec:3}. To establish the existence of the solution, we apply the  Faedo--Galerkin method combined with higher differentiability estimates that we will  derive at the level of Galerkin approximations. These estimates and the limit from the Galerkin approximation to the continuous level represent the core of this section. %In Subsect. \ref{apriori_revisited}, we establish and summarize the estimates that are uniform with respect to $\varepsilon$.

\subsection{Galerkin approximations}\label{4.1} Consider the basis $\left\{\vect{\omega}_r\right\}_{r=1}^{\infty}$ in $W_{per,div}^{1,2}$ consisting of solutions of the following spectral problem:
\begin{align}\label{Dec:GS}
    \int_{\Omega}\nabla\vect{\omega}_r\cdot\nabla\vect{\varphi}\dd x&=\lambda_r\int_{\Omega}\vect{\omega}_r\cdot\vect{\varphi}\dd x \qquad\text{~for all~} \vect{\varphi}\in W^{1,2}_{per,div}.
\end{align}
It is well known (see, e.g. \cite{temam} or \cite[Appendix A.4]{mnrr96}) that there is a nondecreasing sequence of (positive) eigenvalues $\{\lambda_r\}_{r=1}^{\infty}$ and a corresponding set of eigenfunctions $\left\{\vect{\omega}_r\right\}_{r=1}^{\infty}$ that are orthogonal in  $W^{1,2}_{per}$ and orthonormal in  $L^2_{per}$. Moreover, the projections $\mathcal{P}^n$ defined through $\mathcal{P}^n(\bv)=\sum_{i=1}^n (\bv, \vect{\omega}_i) \vect{\omega}_i$ are continuous as mappings from $L^2_{per,div}$ to $L^2_{per,div}$ and from $W^{1,2}_{per,div}$ to $W^{1,2}_{per,div}$. Also,   thanks to $\Omega$-periodicity and elliptic regularity, the $\Omega$-periodic extensions of $\vect{\omega}_r$ belong to $C^{\infty}(\R^3)$, and the projections $\mathcal{P}^n$ are continuous from $W^{k,2}_{per}$ to $W^{k,2}_{per}$ for any $k\in \mathbb{N}$. In fact,
\begin{equation}
    \label{kk3} \|P^n\|_{\mathcal{L}(L^2_{per,div})} = 1, \quad \|P^n\|_{\mathcal{L}(W^{1,2}_{per,div})} = 1 \quad\textrm{ and }\quad \|P^n\|_{\mathcal{L}(W^{k,2}_{per,div})} = C_k,
\end{equation}
where $C_k$ are certain positive constants.
 These results, in conjunction with the interpolation inequality $\|z\|_{1,\infty}\le C \|z\|_{1,2}^{1/4} \|z\|_{3,2}^{3/4}$, imply that (for any $\vect{u}\in W^{3,2}_{per}$)
\begin{equation}
    \label{ka_1}
    \|P^n(\mathbb{D}\vect{u}) - \mathbb{D}\vect{u}\|_{\infty}\le
    C \|P^n\vect{u} - \vect{u}\|_{1,2}^{1/4} \|P^n\vect{u} - \vect{u}\|_{3,2}^{3/4} \le C C^{3/4}_3 \|\vect{u}\|_{3,2}^{3/4}\|P^n\vect{u} - \vect{u}\|_{1,2}^{1/4} \to 0 \textrm{ as } n\to \infty.
\end{equation}
Consequently, using \eqref{kk2}, we observe that, for $n$ sufficiently large,
\begin{equation}
    \label{kk4}
    \|P^n(\mathbb{D}\bv_{0\varepsilon})\|_{\infty} \le \|P^n(\mathbb{D}\bv_{0\varepsilon}) - \mathbb{D}\bv_{0\varepsilon}\|_{\infty} + \|\mathbb{D}\bv_{0\varepsilon}\|_{\infty} < m.
\end{equation}

Next, for an arbitrary  but fixed $n\in \N$, we look for $\bv^n$ in the form
\begin{equation}\label{galerkin_def}\notag
    \bv^n(t,x)=\sum_{r=1}^nc_r^n(t)\,\vect{\omega}_r(x),
\end{equation}
where the coefficients  $c_r^n$, $r=1, \dots, n$, are determined as the solution of the following   ODE initial-value problem:
\begin{align}\label{galerkin1}
    (\partial_t{\bv}^n, \vect{\omega}_r) &+ ([\nabla \bv^n]\bv^n, \vect{\omega}_r) + ({\mathbb{S}}_n(\mathbb{D}\bv^n), \mathbb{D}\vect{\omega}_r) + \varepsilon\left((1+|\mathbb{D}\bv^n|^2) \mathbb{D}\bv^n,  \mathbb{D}\vect{\omega}_r\right) =0,\quad r=1,\ldots, n,\\ \bv^n(0, \cdot)&= \mathcal{P}^n(\bv_0) \quad \iff \quad c^n_r(0)= (\bv_0, \vect{\omega}_r), \qquad r=1,\dots,N,\label{galerkin2}
\end{align}
where
\begin{equation}
{\mathbb{S}}_n(\mathbb{D}) := f_n(|\mathbb{D}|)\frac{\mathbb{D}}{|\mathbb{D}|} := \frac{(|\mathbb{D}|-m)_{+}}{\left( M^a - (\min\{|\mathbb{D}|,M-1/n\})^{a}\right)^{\frac{1}{a}}}\frac{\mathbb{D}}{|\mathbb{D}|}. \label{galerkin3}
\end{equation}
Note that
\begin{equation}
    |{\mathbb{S}}_n(\mathbb{D})| = f_n(|\mathbb{D}|) \ge 0. \label{galerkin4}
\end{equation}
 Further details concerning the properties of $\mathbb{S}$ and ${\mathbb{S}}_n$ are given in  the Appendix. We will not recall them here, but  will always refer to the Appendix before we use them.

 By defining $\vect{c}^n:=(c_1^n, c_2^n, \dots, c_n^n)$, the above   ODE initial-value problem \eqref{galerkin1}, \eqref{galerkin2} can be written in the form
$$
   \frac{\textrm{d}\vect{c}^n}{\textrm{d} t} = \vect{g}_n(\vect{c}^n), \qquad \vect{c}^n(0) =P^n\bv_0.
$$
As $\vect{g}_n$ is a continuous function, the local-in-time existence of the solution to \eqref{galerkin1}-\eqref{galerkin3} follows from Peano's  existence theorem. The first uniform  estimate that we establish next, see \eqref{est_2} below, then implies the existence of the solution on the whole interval $[0,T]$. The properties of the solution established below  guarantee also its uniqueness (  by arguing similarly as in Sect.~\ref{sec:3}).

\subsection{First uniform estimates}\label{4.2}
%At this point functions $u^N:[0,\tau^N)\times\Omega\to\R$ are merely local approximate solutions of the perturbed problem. Here we fix this by deriving standard energy estimates for them. These estimates will  also be needed (although not sufficient themselves as discussed in \ref{difficulties}) later.
Multiplying the $r$-th equation in~\eqref{galerkin1} by $c_r$, summing these equations  through $r=1,\dots,n$, and integrating the result over  the time interval $(0,t)$ where $t\in (0,T]$, we obtain
\begin{equation}\label{est_1}
    \frac 12 \Abs{\bv^n(t, \cdot)}_{2}^2 + \int_0^t ({\mathbb{S}}_n(\mathbb{D}\bv^n), \mathbb{D}\bv^n) + \varepsilon \int_0^t \left((1+|\mathbb{D}\bv^n|^2) \mathbb{D}\bv^n,  \mathbb{D}\bv^n\right) = \frac12 \Abs{P^n\bv_{0\varepsilon}}_{2}^2 \le \frac12 \Abs{\bv_{0\varepsilon}}_{2}^2 \le \frac12 \Abs{\bv_{0}}_{2}^2.
\end{equation}
This leads to
\begin{align}\label{est_2}
    \sup_{t\in (0,T)} |\vect{c}^n(t)|_{\mathbb{R}^n} = \sup_{t\in (0,T)} \Abs{\bv^n(t, \cdot)}_{2}^2 &\le \Abs{\bv_0}_{2}^2, \\
    \int_0^T ({\mathbb{S}}_n (\mathbb{D}\bv^n), \mathbb{D}\bv^n) + \varepsilon \int_0^T \left((1+|\mathbb{D}\bv^n|^2)\mathbb{D}\bv^n,  \mathbb{D}\bv^n\right) &\le \Abs{\bv_0}_{2}^2. \label{est_25}
\end{align}
Using also \eqref{galerkin3}, we conclude from \eqref{est_25} that
\begin{align}
    \int_Q f_n (|\mathbb{D}\bv^n|) |\mathbb{D}\bv^n| + \varepsilon \int_0^T \Abs{\mathbb{D}\bv^n}_{4}^{4} &\le \Abs{\bv_0}_{2}^2.\label{est_3}
\end{align}
Since $f_n$ vanishes on $\{(t,x)\in Q; |\mathbb{D}\bv^n|\le m\}$, it follows from \eqref{est_3} that
\begin{equation}
    \label{est_4}
    m \int_Q f_n(|\mathbb{D}\bv^n|) \le \Abs{\bv_0}_{2}^2,
\end{equation}
which in combination with \eqref{galerkin4} implies that
\begin{equation}
    \label{est_5}
    \int_Q |\mathbb{S}^n| \le \frac{\Abs{\bv_0}_{2}^2}{m}, \qquad \textrm{ where } \quad \mathbb{S}^n:= {\mathbb{S}}_n(\mathbb{D}\bv^n).
\end{equation}
The estimates \eqref{est_2}, \eqref{est_25}, \eqref{est_3} and  \eqref{est_5} are uniform with respect to $n$ and $\varepsilon$.

We also conclude from \eqref{est_3} and \eqref{galerkin3} that
\begin{align*}
    \|\bv_0\|^2_2 \ge \int_{\left\{|\mathbb{D}\bv^n|\ge M-\tfrac{1}{n}\right\}} f_n (|\mathbb{D}\bv^n|) |\mathbb{D}\bv^n| \ge \frac{(M-m-\tfrac{1}{n})(M-\tfrac{1}{n})}{(M^a - (M-\tfrac{1}{n})^a)^{\frac{1}{a}}}\,\, \left|\left\{|\mathbb{D}\bv^n|\ge M-\tfrac{1}{n}\right\}\right|,
\end{align*}
which leads to
\begin{align} \label{est_5b}
    \left|\left\{|\mathbb{D}\bv^n|\ge M-\tfrac{1}{n}\right\}\right|\le \|\bv_0\|^2_2 \frac{(M^a - (M-\tfrac{1}{n})^a)^{\frac{1}{a}}}{(M-m-\tfrac{1}{n})(M-\tfrac{1}{n})} \to 0 \quad \textrm{ as } n\to \infty\,.
\end{align}
Since, for any $\omega >0$, the function $s\mapsto s/(s+\omega)$ is nonnegative and bounded from above by $1$ on $[0, +\infty)$, it follows from \eqref{est_5b} that
\begin{equation}\label{est_5c}
    \lim_{n\to \infty} \int_Q \frac{\left(|\mathbb{D}\bv^n| - M-\tfrac{1}{n}\right)_+}{\left(|\mathbb{D}\bv^n| - M-\tfrac{1}{n}\right)_+ + \omega} = 0.  \qquad (\textrm{for any } \omega>0)
\end{equation}

We also observe from \eqref{est_3} that
\begin{equation}
    \label{est_6} \int_0^T \Abs{\mathbb{D}\bv^n}_{4}^{4} \le C(\varepsilon^{-1}),
\end{equation}
where $C(\varepsilon^{-1}):=\frac{\Abs{\bv_0}_{2}^2}{\varepsilon}$. In what follows, $C(\varepsilon^{-1})$ denotes a function of $\varepsilon$ that is independent of $n$ but tends to $\infty$ as $\varepsilon\to 0_{+}$; its particular form can change from line to line.

\subsection{Spatial derivative estimates}\label{4.3}~This time, we multiply the $r$-th equation in \eqref{galerkin1} by $\lambda_r c_r$ and sum the identities obtained  through $r=1,\dots,N$. Using also \eqref{Dec:GS}, we get
\begin{align*}
\frac12 \ddt \|\nabla \bv^n\|_2^2 + (\nabla{\mathbb{S}}_n(\mathbb{D}\bv^n), \nabla\mathbb{D}\bv^n) &+ \varepsilon \left(\nabla \left[(1+|\mathbb{D}\bv^n|^2) \mathbb{D}\bv^n\right],  \nabla \mathbb{D}\bv^n\right)\\ &= - \int_{\Omega} \partial_{x_s} v^n_k \partial_{x_k} v^n_i \partial_{x_s} v^n_i.
\end{align*}
Using \eqref{ox_22a} and integrating the last identity w.r.t.~time over $(0,t)$, $t\in (0,T)$, we arrive at
\begin{equation}\label{E2}
\begin{split}
\frac12 \|\nabla \bv^n(t, \cdot)\|_2^2 + \int_{Q_t} \mathcal{Q}_n(\mathbb{D}\bv^n, &\nabla \mathbb{D}\bv^n) + \varepsilon \int_{Q_t} (1+|\mathbb{D}\bv^n|^2) |\nabla \mathbb{D}\bv^n|^2 + 2|\mathbb{D}\bv^n\cdot \mathbb{D}(\nabla\bv^n)|^2 \\ &= - \int_{Q_t} \partial_{x_s} v^n_k \partial_{x_k} v^n_i \partial_{x_s} v^n_i + \frac12 \Abs{P^n\nabla\bv_{0\varepsilon}}_{2}^2 \\
&\le - \int_{Q_t} \partial_{x_s} v^n_k \partial_{x_k} v^n_i \partial_{x_s} v^n_i + \frac12 \Abs{\nabla\bv_0}_{2}^2.
\end{split}
\end{equation}
Since
$$
\left| \int_{Q} \partial_{x_s} v^n_k \partial_{x_k} v^n_i \partial_{x_s} v^n_k \right| \le \int_0^T \|\nabla \bv^n\|_3^3,
$$
we conclude from \eqref{E2} and \eqref{est_6} that
\begin{equation}
    \sup_{t\in (0,T)} \|\nabla \bv^n(t, \cdot)\|_2^2 + \int_{0}^T \|\bv^n\|_{2,2}^2 + \int_{Q} \mathcal{Q}_n(\mathbb{D}\bv^n, \nabla \mathbb{D}\bv^n) + \int_0^T \|\nabla |\mathbb{D}\bv^n|^2\|_2^2 \le C(\varepsilon^{-1}). \label{est_7}
\end{equation}

\subsection{Time derivative estimate (uniform with respect to \texorpdfstring{$n$}{n})}\label{4.4}
We multiply the $r$-th equation in~\eqref{galerkin1} by $\ddt c_r$ and sum these equations  through $r=1,\dots,N$. This leads to
\begin{equation}\label{zvole_1}
        \|\partial_t \bv^n\|_2^2 + \varepsilon \ddt \int_{\Omega} \frac{|\mathbb{D}\bv^n|^2}{2} + \frac{|\mathbb{D}\bv^n|^4}{4} + \ddt \int_{\Omega} F_n(|\mathbb{D}\bv^n|)
        = - 2\int_{\Omega} \bv^n \cdot (\mathbb{D}\bv^n)\partial_t \bv^n =:I_n,
\end{equation}
where
\begin{equation*}
    F_n(|\mathbb{D}|):= \int_0^{|\mathbb{D}|}\frac{(s-m)_{+}}{\left(M^a - (\min\{s,M-1/n\})^{a}\right)^{\frac{1}{a}}}\, \dd s.
\end{equation*}
Note that
\begin{equation}
   F_n(|\mathbb{D}|) = 0 \qquad \textrm{ if } \quad |\mathbb{D}|\le m \label{kk5}
\end{equation}
and
$$
|F_n(|\mathbb{D}_1|) - F_n(|\mathbb{D}_2|)| \le 2^{-1}\left[M^a - (M-1/n)^a\right]^{-\frac{1}{a}}((|\mathbb{D}_1|-m)^2-(|\mathbb{D}_2|-m)^2) \qquad \textrm{ otherwise}.
$$

After integrating \eqref{zvole_1} over $(0,t)$  for $t\in(0,T]$, we arrive at
\begin{equation}\label{zvole_2}
    \begin{split}
            \frac{\varepsilon}{2} \|\mathbb{D}\bv^n(t,\cdot)\|_2^2 + \frac{\varepsilon}{4} \|\mathbb{D}\bv^n(t,\cdot)\|_4^4 + \int_{\Omega} F_n(|\mathbb{D}\bv^n(t,\cdot)|) &+ \int_0^t \|\partial_t \bv^n\|_2^2
        = \int_0^t I_n \\ &+ \int_{\Omega} F_n(|P^n\mathbb{D}\bv_{0\varepsilon}|) + \frac{\varepsilon}{2} \|P^n\mathbb{D} \bv_{0\varepsilon}\|_2^2 + \frac{\varepsilon}{4} \|P^n\mathbb{D}\bv_{0\varepsilon}\|_4^4.
    \end{split}
\end{equation}
By virtue of \eqref{kk4} and \eqref{kk5}, $F_n(|P^n\mathbb{D}\bv_{0\varepsilon}|)=0$ for $n$ large enough, and the last two terms in \eqref{zvole_2} are bounded by the constant $C= C(m)$ depending on $m$ but independent of $n$ and $\varepsilon$. Next, we observe, using also \eqref{est_6}  in the last inequality, that
\begin{equation}
    \label{zvole_3}
    \begin{split}
    \int_0^t |I_n| \le 2\int_0^{t} \|\bv^n\|_4 \|\mathbb{D}\bv^n\|_4 \|\partial_t \bv^n\|_2 &\le C \int_0^{t} \|\bv^n\|_{1,4}^2\|\partial_t \bv^n\|_2 \\ &\le
    \frac{1}{2}\int_0^t \|\partial_t \bv^n\|_2^2 + \frac{C^2}{2} \int_0^t \|\bv^n\|_{1,4}^4 \le \frac{1}{2}\int_0^t \|\partial_t \bv^n\|_2^2 + C(\varepsilon^{-1}).
    \end{split}
\end{equation}
Combining \eqref{zvole_3} with \eqref{zvole_2}, using also Korn's inequality, we observe that
\begin{align}
    \int_0^T \|\partial_t \bv^n\|_{2}^2 &\le C(\varepsilon^{-1}), \label{zvole_4} \\ \sup_{t\in (0,T)} \|\nabla \bv^n\|_4 &\le C(\varepsilon^{-1}). \label{zvole_5}
\end{align}
\subsection{Additional time derivative estimate (uniform with respect to \texorpdfstring{$n$}{n})}\label{4.5}
We differentiate~\eqref{galerkin1} with respect to time and multiply the $r$-th equation of the result by $\ddt c_r$. After summing the resulting equations  through $r=1,\dots,N$, we obtain
\begin{equation}\label{k1}
\begin{split}
    \frac12 \ddt \|\partial_t \bv^n\|_2^2 + (\partial_t{\mathbb{S}}_n(\mathbb{D}\bv^n), \partial_t \mathbb{D}\bv^n) &+ \varepsilon \left(\partial_t \left[(1+|\mathbb{D}\bv^n|^2) \mathbb{D}\bv^n\right],  \partial_t \mathbb{D}\bv^n\right)\\ &= - \int_{\Omega} (\partial_t\bv^n \otimes \partial_t\bv^n)\cdot \mathbb{D}\bv^n,
\end{split}
\end{equation}
where we used the   following identity, valid for any regular enough $\vect{u}$ satisfying $\di\vect{u} = 0$:
\begin{align*}
    \int_{\Omega} \partial_t ( u_k \partial_{x_k} u_i) \partial_t u_i &= \int_{\Omega}   \partial_t u_k \partial_{x_k} u_i \partial_t u_i + \int_{\Omega} u_k \partial^{(2)}_{x_k, t}\vect{u} \cdot \partial_t \vect{u} = \int_{\Omega} (\partial_t\vect{u} \otimes \partial_t\vect{u})\cdot \nabla\vect{u} + \int_{\Omega} \vect{u} \cdot \nabla\left(\frac{|\partial_t \vect{u}|^2}{2}\right) \\ &=\int_{\Omega} (\partial_t\vect{u} \otimes \partial_t\vect{u})\cdot \mathbb{D}\vect{u}.
\end{align*}
Using the notation introduced in \eqref{ox_22} and the relation given in \eqref{ox_22a}, multiplying \eqref{k1} by a standard truncation function $\zeta\in C^{\infty}(\mathbb{R})$ satisfying $\zeta(0)=0$, $\zeta > 0$ in $(0, \infty)$ and $\zeta(t)=1$ on $[\delta,\infty)$ for any $\delta>0$, and integrating the result over $(0,t)$, we obtain
\begin{equation}\label{k2}
\begin{split}
        \frac12 \zeta(t) \|\partial_t \bv^n(t,\cdot)\|_2^2 &+ \int_{Q_t} \zeta \mathcal{Q}_n(\mathbb{D}\bv^n, \partial_t\mathbb{D}\bv^n) + \varepsilon \int_{Q_t} \zeta \left[ (1+|\mathbb{D}\bv^n|^2) |\mathbb{D}(\partial_t \bv^n)|^2 + 2|\mathbb{D}\bv^n\cdot \mathbb{D}(\partial_t\bv^n)|^2 \right] \\ &
        = \int_{Q_t} \zeta' |\partial_t \bv^n|^2 - \int_{Q_t} \zeta (\partial_t \bv^n\otimes\partial_t \bv^n)\cdot (\mathbb{D}\bv^n) =: J_n^1 + J_n^2.
\end{split}
\end{equation}
Next, for any $t\in (\delta, T]$, we estimate the terms $J_n^1$ and $J_n^2$ on the right-hand side of \eqref{k2} with the help of \eqref{zvole_4}, \eqref{zvole_5} and H\"{o}lder's  inequality and interpolation inequalities as follows:
\begin{align*}
    |J_n^1| &\le C(\delta^{-1}) \int_0^T \|\partial_t\bv^n\|_2^2 \le C(\delta^{-1}, \varepsilon^{-1}), \\
    |J_n^2| &\le \int_{Q_t} \zeta |\partial_t\bv^n|^2 |\mathbb{D}\bv^n| \le \int_0^t \zeta \|\partial_t\bv^n\|_{8/3}^2 \|\mathbb{D}\bv^n\|_4 \le C \!\!\sup_{t\in (0,T)}\|\nabla \bv^n\|_4 \int_0^t  \zeta \|\partial_t\bv^n\|_{2}^{5/4} \|\mathbb{D}(\partial_t\bv^n)\|_{2}^{3/4} \\
    &\le C(\varepsilon^{-1}) \int_0^t (\|\partial_t\bv^n\|_{2}^2)^{5/8} (\zeta \|\mathbb{D}(\partial_t\bv^n)\|_{2}^2)^{3/8} \le \frac{\varepsilon}{2} \int_{Q_t} \zeta |\mathbb{D}(\partial_t\bv^n)|^2 + C(\varepsilon^{-1}) \int_0^T \|\partial_t \bv^n\|_2^2 \\
    & \le \frac{\varepsilon}{2} \int_{Q_t} \zeta |\mathbb{D}(\partial_t\bv^n)|^2 + C(\varepsilon^{-1}).
\end{align*}
Incorporating these estimates into \eqref{k2} we conclude that, for any $T\ge t > \delta>0$,
\begin{equation}\label{k3}
\begin{split}
        \sup_{t\in [\delta, T]} \|\partial_t \bv^n(t,\cdot)\|_2^2 &+ \int_{Q_{\delta, T}} \mathcal{Q}_n(\mathbb{D}\bv^n, \partial_t\mathbb{D}\bv^n) + \varepsilon \int_{Q_{\delta, T}} |\mathbb{D}(\partial_t \bv^n)|^2 \le C(\delta^{-1}, \varepsilon^{-1}),
\end{split}
\end{equation}
where $Q_{\delta,t}:=(\delta,t)\times \Omega$, $t\in (\delta, T]$.

Next, we ``repeat" the whole procedure again, but replace the time derivative by a time difference. For this purpose, for any $t\in (0,T)$ and for any $\tau$ (positive or negative) small enough so that $t+\tau\in (0,T)$ and for any function $\vect{u}$, we set
\begin{equation}\label{kdef}
  \eth_{\tau}\vect{u}(t,x):= \vect{u}(t+\tau,x) - \vect{u}(t,x).
\end{equation}
Then we consider \eqref{galerkin1} at times $t+\tau$ and $t$ and take their difference. Multiplying the $r$-th equation of the result by $c_r(t+\tau)-c_r(t)$ and summing the resulting equations  through $r=1,\dots,N$, we arrive at
\begin{equation}\label{k1dif}
\begin{split}
    \frac12 \ddt \|\eth_{\tau} \bv^n\|_2^2 + (\eth_{\tau}{\mathbb{S}}_n(\mathbb{D}\bv^n), \eth_{\tau} \mathbb{D}\bv^n) &+ \varepsilon \left(\eth_{\tau} \left[(1+|\mathbb{D}\bv^n|^2) \mathbb{D}\bv^n\right],  \eth_{\tau} \mathbb{D}\bv^n\right)\\ &= - \int_{\Omega} (\eth_{\tau}\bv^n \otimes \eth_{\tau}\bv^n)\cdot \mathbb{D}\bv^n,
\end{split}
\end{equation}
where we used the   following identity (for $\vect{u}$ satisfying $\di\vect{u} = 0$):
\begin{align*}
    \int_{\Omega} \eth_{\tau} ( u_k \partial_{x_k} u_i) \eth_{\tau} u_i &= \int_{\Omega} u_k (\cdot + \tau, \cdot) \partial_{x_k} \eth_{\tau} \vect{u} \cdot \eth_{\tau} \vect{u} + \int_{\Omega}   \eth_{\tau} u_k \partial_{x_k} u_i \eth_{\tau} u_i \\ &= \int_{\Omega} \vect{u} \cdot \nabla\frac{|\eth_{\tau} \vect{u}|^2}{2} + \int_{\Omega} (\eth_{\tau}\vect{u} \otimes \eth_{\tau}\vect{u})\cdot \nabla\vect{u}   =\int_{\Omega} (\eth_{\tau}\vect{u} \otimes \eth_{\tau}\vect{u})\cdot \mathbb{D}\vect{u}.
\end{align*}
Multiplying \eqref{k1dif} by $\zeta$ introduced before \eqref{k2} and integrating the result over $(0,T)$, we get
\begin{equation}\label{k2dif}
\begin{split}
        \frac12 \|\eth_{\tau} \bv^n(T,\cdot)\|_2^2 &+ \int_{Q} \zeta (\eth_{\tau}{\mathbb{S}}_n(\mathbb{D}\bv^n), \eth_{\tau} \mathbb{D}\bv^n) + \varepsilon \int_{Q} \zeta \left(\eth_{\tau} \left[(1+|\mathbb{D}\bv^n|^2) \mathbb{D}\bv^n\right],  \eth_{\tau} \mathbb{D}\bv^n\right) \\ &
        = \int_{Q} \zeta' |\eth_{\tau} \bv^n|^2 - \int_{Q} \zeta (\eth_{\tau} \bv^n \otimes\eth_{\tau} \bv^n)\cdot (\mathbb{D}\bv^n).
\end{split}
\end{equation}

\subsection{Improved estimates for \texorpdfstring{$\mathbb{S}^n$}{S} introduced in \texorpdfstring{\eqref{est_5b}}{5}
} \label{4.6}
Referring to \eqref{est_5b} for the definition of $\mathbb{S}^n := \mathbb{S}_n(\mathbb{D}\bv^n)$ and the uniform  bound on $\mathbb{S}^n$ established in the non-reflexive space $L^1(Q)$, the aim of this subsection is to improve the  uniform bound on $\mathbb{S}^n$.  To this end, we first notice that,  because the function $f_n$ is increasing on $[m,M-1/n]$  (c.f.~\eqref{galerkin3}), there is  an $s_*\in (0,\infty)$ such that
\begin{equation}\label{k70}
|\mathbb{S}_n((m+M)/2)| = s_* \qquad \left(\textrm{i.e.,} \quad s_* = \frac{((M-m)/2)}{(M^a - ((M+m)/2)^a)^{1/a}}\right).
\end{equation}
Consequently, independently of $n$ and $\varepsilon$,
\begin{equation}\label{k71}
\|\mathbb{S}^n\|_{L^{\infty}(Q\cap\{|\mathbb{D}\bv^n|\le (M+m)/2)} \le C.
\end{equation}
In the rest of this subsection, we will concentrate on improving the uniform bounds on $\mathbb{S}^n$ on the set $Q\cap\{|\mathbb{D}\bv^n|\ge (M+m)/2\}$.
We first notice that it follows from \eqref{est_7} and \eqref{k3} that
\begin{equation}\label{k72}
\int_{Q_{\delta,T}} \mathcal{Q}_n(\mathbb{D}\bv^n, \nabla \mathbb{D}\bv^n) + \mathcal{Q}_n(\mathbb{D}\bv^n, \partial_t \mathbb{D}\bv^n) \le C(\delta^{-1},\varepsilon^{-1}).
\end{equation}
Using \eqref{ox_24}, and introducing the notation $\nabla_{t,x}:=(\partial_t, \nabla)=(\partial_t,\partial_{x_1},\partial_{x_2}, \partial_{x_3})$ we conclude that
\begin{equation}\label{k73}
\int_{Q_{\delta,T}\cap\{\{|\mathbb{D}\bv^n|\ge (M+m)/2\}} \frac{|\nabla_{t,x}\mathbb{S}^n|^2}{(1+|\mathbb{S}^n|)^{1+a}} \le C(\delta^{-1},\varepsilon^{-1}).
\end{equation}
Recalling the definition of $s_*$, see \eqref{k70}, it follows from \eqref{k73} that
$$
\int_{Q_{\delta,T}} \frac{|\nabla_{t,x}(|\mathbb{S}^n| - s_*)_{+}|^2}{\left[(1+ s_* + |\mathbb{S}^n| - s_*))^{\frac{1+a}{2}}\right]^2} \le C(\delta^{-1},\varepsilon^{-1}).
$$
Hence,
$$
\int_{Q_{\delta,T}} |\nabla_{t,x}\left(1+s_*+(|\mathbb{S}^n| - s_*)_{+}\right)^{\frac{1-a}{2}}|^2 \le C(\delta^{-1},\varepsilon^{-1}).
$$
Applying the Sobolev embedding of $W^{1,2}(Q)$ into $L^4(Q)$, we then conclude that
\begin{equation} \label{k74}
    \int_{Q_{\delta,T}} \left(1+s_* + (|\mathbb{S}^n| - s_*)_{+}\right)^{2(1-a)}\le C(\delta^{-1},\varepsilon^{-1}).
\end{equation}
This together with \eqref{k71} implies that, for $2(1-a)>1 \iff a\in (0,1/2)$, $\{\mathbb{S}^n\}$ satisfies the improved estimates (uniform with respect to $n$)
\begin{equation}\label{k75}
\|\mathbb{S}^n\|_{L^{2(1-a)}(Q_{\delta,T})} \le C(\delta^{-1},\varepsilon^{-1}).
\end{equation}

\subsection{Limit \texorpdfstring{$n\to \infty$}{n}} \label{4.7}~
 Thanks to the reflexivity and separability of the underlying function spaces and the Aubin-Lions compactness lemma, it follows from the estimates \eqref{est_6}, \eqref{est_7}, \eqref{zvole_4}, \eqref{zvole_5}, \eqref{k3} and \eqref{k75} that there is a subsequence of $\left\{(\bv^n, \mathbb{S}^n)\right\}_{n=1}^{\infty}$ (which we do not relabel) such that %\stackrel{\ast}{\rightharpoonup}
\begin{subequations}\label{limity_n}
\begin{align}
    \label{k84}
    \bv^n&\rightharpoonup \bv &&\text{~weakly in~} L^2\left(0,T; W^{2,2}_{per}\right) \textrm{ and *-weakly in~} L^{\infty}\big(0,T, W^{1,4}_{per,div}\big), \\ \label{k85}
    \partial_t{\bv}^n&\rightharpoonup \, \partial_t {\bv} &&\text{~weakly in~} L^{2}(Q), \\ \label{k86a}
    \bv^n&\to \bv &&\text{~strongly in~} L^q(Q), \qquad (q\in [1, \infty)), \\\label{k86}
    \mathbb{D}\bv^n&\to \mathbb{D}\bv &&\text{~strongly in~} L^q(Q), \qquad (q\in [1, 10/3)), \\ \label{k88}
    \mathbb{D}\bv^n&\to \mathbb{D}\bv &&\text{~a.e. in~} Q, \\
    \label{k87}  \mathbb{S}^n&\rightharpoonup \mathbb{S} &&\text{~weakly in~} L^{2(1-a)}(Q_{\delta,T}).
\end{align}
\end{subequations} %{\color{blue} where the results depending on $\delta$ are valid for any $\delta>0$ provided $g\in W^{1,2}\left(0,T;L^2(\Omega)\right)$ and for $\delta=0$ provided $u_0\in W^{2,2}_{per}(\Omega)$.} It is easy to check that the limit $\partial_t{u}\in L^{2}(Q)$ is the weak time derivative of the limit ${u\in L^{2}\left(0,T,W_{per}^{1,2}(\Omega)\right)}$.

Next, we show that
\begin{equation}\label{k8}
    |\mathbb{D}\bv| < M \qquad \textrm{ a.e. in } Q. %\qquad \iff \qquad \|\mathbb{D}\bv\|_{\infty, Q} < M.
\end{equation}
Indeed, using \eqref{est_5c} and \eqref{k88} together with Lebesgue dominated convergence theorem  we have that
\begin{align*}
    |\{ |\mathbb{D}\bv| \ge M\}| &= \int_Q \chi_{\{|\mathbb{D}\bv| \ge M\}} = \lim_{\omega \to 0_{+}} \int_Q \frac{(|\mathbb{D}\bv|-M)_{+}}{(|\mathbb{D}\bv|-M)_{+} + \omega}
    = \lim_{\omega \to 0_{+}} \lim_{n\to \infty} \int_Q \frac{(|\mathbb{D}\bv^n|-M-\tfrac{1}{n})_{+}}{(|\mathbb{D}\bv^n|-M-\tfrac{1}{n})_{+} + \omega} = 0,
\end{align*}
and \eqref{k8} follows.

Recalling that $\mathbb{S}^n=\mathbb{S}_n(\mathbb{D}\bv^n)$, see \eqref{galerkin3}, we conclude from \eqref{k88} and \eqref{k87} that
\begin{equation}
    \label{k89} \mathbb{S}^n=\mathbb{S}_n(\mathbb{D}\bv^n) \to \mathbb{S}(\mathbb{D}\bv)=\mathbb{S} \qquad \textrm{ a.e. in } Q.
\end{equation}
 By applying Fatou's lemma to \eqref{est_5}, we then observe that
\begin{equation}
    \label{k90} \int_Q |\mathbb{S}(\mathbb{D}\bv)|\le \frac{\|\bv_0\|^2_2}{m}\qquad \textrm{ i.e.,} \qquad  \mathbb{S} = \mathbb{S}(\mathbb{D}\bv)\in L^1(Q).
\end{equation}
Note that as $\mathbb{D}\bv = \mathbb{D}\bv^{\varepsilon}$, \eqref{k90} represents  a bound on $\{\mathbb{S}(\mathbb{D}\bv^{\varepsilon})\}$ that is uniform w.r.t.~$\varepsilon$.

Next, we multiply \eqref{galerkin1} by $\psi\in \mathcal{D}((0,T))$, integrate the result over (0,T) and study the limit as $n\to \infty$.  By applying \eqref{k85} to the first term, \eqref{k84} and \eqref{k86a} to the second term, Vitali's convergence  theorem and \eqref{k89} to the third term, and finally Vitali's convergence  theorem and \eqref{k88} to the fourth term, we obtain (using standard density arguments regarding the space of the test functions) that \eqref{wf-rovnice} holds. Also,  thanks to \eqref{k85}, $\bv \in C([0,T], L^2(\Omega))$ and the attainment of the initial condition follows from the construction.

The next two subsections focus on taking the limit $n\to \infty$ in suitable forms of uniform estimates established above, this time with the intention of  establishing estimates that are uniform w.r.t.~$\varepsilon$.

\subsection{Weak lower semicontinuity of \texorpdfstring{$\mathcal{Q}(\mathbb{D}\bv^n, \partial \mathbb{D}\bv^n)$}{Q}}\label{4.8} Here, we shall prove the following statement: if
\begin{align}
\label{dec:12a}
      \partial \mathbb{D}\bv^n &\rightharpoonup \partial \mathbb{D}\bv \qquad \textrm{weakly in } L^2\left(Q; \mathbb{R}^{d}\right) &\textrm{as } n\to \infty,\\
\label{dec:12b}
\mathbb{D}\bv^n &\to \mathbb{D}\bv \qquad \textrm{a.e. in } Q &\textrm{as } n\to\infty,
\end{align}
then
\begin{equation}
  \label{dec:11} \int_{Q_{\delta,T}} \mathcal{Q}(\mathbb{D}\bv, \partial \mathbb{D}\bv) \le \liminf_{n\to\infty} \int_{Q_{\delta,T}} \mathcal{Q}_n(\mathbb{D}\bv^n, \partial \mathbb{D}\bv^n).
\end{equation}
 Here, and in what follows, the symbol $\partial$ represents the spatial gradient $\nabla$ or the partial derivative  $\partial_t$ with respect to time. To prove  \eqref{dec:11}, we first observe from the definition of $\mathcal{Q}_n$, see \eqref{ox_22}, that, for $n\ge n_0$,
\begin{align} \label{k91}
    \int_{Q_{\delta,T}} \mathcal{Q}_n(\mathbb{D}\bv^n, \partial \mathbb{D}\bv^n) \ge     \int_{Q_{\delta,T}} \mathcal{Q}_{n_0}(\mathbb{D}\bv^n, \partial \mathbb{D}\bv^n).
\end{align}
The  expression on the right-hand side of this inequality can be identified with the square of the weighted $L^2$-norm of the form
$$
\oldnorm{\partial \mathbb{D}\bv^n}_{\mathcal{A}(\mathbb{D}\bv^n)}^2.
$$
 This means that
$$
\int_{Q_{\delta,T}} \mathcal{Q}_{n_0}(\mathbb{D}\bv^n, \partial \mathbb{D}\bv^n) = \oldnorm{\partial \mathbb{D}\bv^n}_{\mathcal{A}(\mathbb{D}\bv^n)}^2.
$$
By fixing $n_0$, the coefficients of  the fourth-order tensor $\mathcal{A}$, which we do not  state here  explicitly, are bounded. Then,  by arguing in the same manner as in \cite[Subsect. 4.7]{BuHrMa23}, we conclude from \eqref{dec:12a} and \eqref{dec:12b} that, for any $n_0\in \mathbb{N}$,
\begin{align*}
\int_{Q_{\delta,T}\cap\{|\mathbb{D}\bv|\le M-1/n_0\}} \mathcal{Q}(\mathbb{D}\bv, \partial \mathbb{D}\bv) \le
    \int_{Q_{\delta,T}} \mathcal{Q}_{n_0}(\mathbb{D}\bv, \partial \mathbb{D}\bv) &\le \liminf_{n\to\infty}  \int_{Q_{\delta,T}} \mathcal{Q}_{n_0}(\mathbb{D}\bv^n, \partial \mathbb{D}\bv^n) \\ &\overset{\eqref{k91}}{\le}
    \liminf_{n\to\infty} \int_{Q_{\delta,T}} \mathcal{Q}_n(\mathbb{D}\bv^n, \partial \mathbb{D}\bv^n).
\end{align*}
Since $n_0$ is arbitrary and \eqref{k8} holds, the monotone convergence theorem gives \eqref{dec:11}.

\subsection{Passage to the limit in the energy inequality}\label{4.9}

We first summarize the available estimates that are uniform with respect to $\varepsilon$. Then we take limit $n\to\infty$ in some previously established identities/inequalities but proceed differently in controlling the terms  that come from the convective term. This will lead to other sets of estimates that are uniform w.r.t.~$\varepsilon$.

We first recall the estimates established in Subsect.~\ref{4.7}. It follows from \eqref{k8} that
\begin{equation}
    \label{jul1} \|\mathbb{D}\bv^{\varepsilon}\|_{\infty}\le M.
\end{equation}
Setting $\mathbb{S}^{\varepsilon}:=\mathbb{S}(\mathbb{D}\bv^{\varepsilon})$, we also have  that
\begin{equation}
    \label{jul2} \int_Q |\mathbb{S}^{\varepsilon}|\le \frac{\|\bv_0\|^2_2}{m}.
\end{equation}

Second, referring to \eqref{est_2} and \eqref{est_3} (neglecting the first non-negative term) and letting $n\to \infty$, we obtain with help of \eqref{k84} and weak-lower semicontinuity of the appropriate norms that
\begin{align}\label{jul3}
    \sup_{t\in (0,T)} \Abs{\bv^\varepsilon(t, \cdot)}_{2}^2 &\le \Abs{\bv_0}_{2}^2, \\
    \varepsilon\int_0^T \|\mathbb{D}\bv^\varepsilon\|_4^4 &\le \Abs{\bv_0}_{2}^2. \label{jul4}
\end{align}

Third, we notice that we have, as a special case of \eqref{E2}, that
\begin{equation*}
\int_{Q} \mathcal{Q}_n(\mathbb{D}\bv^n, \nabla \mathbb{D}\bv^n) \le - \int_{Q} \partial_{x_s} v^n_k \partial_{x_k} v^n_i \partial_{x_s} v^n_i + \frac12 \Abs{\nabla\bv_0}_{2}^2.
\end{equation*}
Letting $n\to \infty$ and using \eqref{dec:11} with $\delta=0$ (noticing that the assumptions \eqref{dec:12a} and \eqref{dec:12b} are fulfilled by virtue of \eqref{k84} and \eqref{k88}) we get
\begin{equation}\label{jul5}
\int_{Q} \mathcal{Q}(\mathbb{D}\bv^\varepsilon, \nabla \mathbb{D}\bv^\varepsilon) \le - \int_{Q} \partial_{x_s} v^\varepsilon_k \partial_{x_k} v^\varepsilon_i \partial_{x_s} v^\varepsilon_k + \frac12 \Abs{\nabla\bv_0}_{2}^2 \overset{\eqref{jul1}}{\le} C(T,L,M) + \frac12 \Abs{\nabla\bv_0}_{2}^2.
\end{equation}

Fourth, neglecting the first three terms in \eqref{zvole_2} and recalling the definition of $I_n$ and the arguments following \eqref{zvole_2} we get
\begin{equation*}
    \int_0^T \|\partial_t \bv^n\|_2^2
        \le - \int_{Q} \bv^n \cdot (\mathbb{D}\bv^n)\partial_t \bv^n + C(m, L).
\end{equation*}
 By letting $n\to \infty$ in this inequality, using \eqref{k85}, \eqref{k86a} and \eqref{k86}, we obtain
\begin{equation}\label{jul6}\begin{split}
    \int_0^T \|\partial_t \bv^\varepsilon\|_2^2
        &\le - \int_{Q} \bv^\varepsilon \cdot (\mathbb{D}\bv^\varepsilon)\partial_t \bv^\varepsilon + C(m, L) \overset{\eqref{jul1}}{\le} M\int_0^T\|\bv^\varepsilon\|_2 \|\partial_t \bv^\varepsilon\|_2 + C(m,L) \\ &\le \frac{1}{2} \int_0^T \|\partial_t \bv^\varepsilon\|_2^2 + \frac{M^2}{2} \int_0^T \|\bv^\varepsilon\|_2^2 + C(m,L).
    \end{split}
\end{equation}
This, together with \eqref{jul3}, implies that
\begin{equation}
    \label{jul7}
    \int_0^T \|\partial_t \bv^\varepsilon\|_2^2 \le C(m,L, T,\|\bv_0\|_2^2).
\end{equation}

Fifth, using \eqref{k2dif} where we neglect the first and the third terms that are nonnegative (the non-negativity of the third term follows from the fact that the corresponding operator is monotone), we arrive at
\begin{equation}\label{jul8}
       \int_{Q} \zeta (\eth_{\tau}{\mathbb{S}}_n(\mathbb{D}\bv^n), \eth_{\tau} \mathbb{D}\bv^n) %+ \varepsilon \int_{Q} \zeta \left(\eth_{\tau} \left[(1+|\mathbb{D}\bv^n|^2) \mathbb{D}\bv^n\right],  \eth_{\tau} \mathbb{D}\bv^n\right) \\ &
        \le \int_{Q} \zeta' |\eth_{\tau} \bv^n|^2 - \int_{Q} \zeta (\eth_{\tau} \bv^n \otimes\eth_{\tau} \bv^n)\cdot (\mathbb{D}\bv^n).
\end{equation}
Recalling the definition of $\eth_{\tau}\vect{u}$, see \eqref{kdef}, we notice that thanks to the  monotonicity property of the mapping $\mathbb{S}$ resp. $\mathbb{S}_n$, see Lemma~\ref{monotone},
$$
\eth_{\tau}{\mathbb{S}}_n(\mathbb{D}\bv^n) \cdot \eth_{\tau} \mathbb{D}\bv^n = \left({\mathbb{S}}_n(\mathbb{D}\bv^n(t+\tau, \cdot)) - {\mathbb{S}}_n(\mathbb{D}\bv^n(t, \cdot))\right)\cdot \left(\mathbb{D}\bv^n(t+\tau, \cdot) - \mathbb{D}\bv^n(t, \cdot)\right) \ge 0,
$$
i.e., the term on the left-hand side of \eqref{jul8} is nonnegative.
Keeping $\tau$ fixed, we take limit $n\to\infty$ in \eqref{jul8}.
Owing to \eqref{k88} and \eqref{k89}, we can apply Fatou's lemma on the left hand side, while on the right-hand side we use the strong convergences \eqref{k86a} and \eqref{k86}. Doing so, we obtain
\begin{equation}\label{jul9}
       \int_{Q} \zeta (\eth_{\tau}{\mathbb{S}}(\mathbb{D}\bv), \eth_{\tau} \mathbb{D}\bv)
        \le \int_{Q} \zeta' |\eth_{\tau} \bv|^2 - \int_{Q} \zeta (\eth_{\tau} \bv \otimes\eth_{\tau} \bv)\cdot (\mathbb{D}\bv),
\end{equation}
where $\bv=\bv^{\varepsilon}$.

Next, we divide~\eqref{jul9} by $\tau^2$ and let $\tau\to 0$. By virtue of \eqref{jul7} and \eqref{jul1} the right-hand side of \eqref{jul9} is bounded by a constant $C(m,M,L,T, \|\bv_0\|_2^2, \delta^{-1})$. The nonnegative term on the left-hand side is thus uniformly bounded with respect to  the time difference. Consequently,
\begin{equation}\label{jul10}
       \int_{Q} \zeta (\partial_t {\mathbb{S}}(\mathbb{D}\bv^\varepsilon), \partial_t \mathbb{D}\bv^{\varepsilon})
        \le C(m,M,L,T, \|\bv_0\|_2^2, \delta^{-1}).
\end{equation}
Finally, using \eqref{ox_18} and recalling the properties of the cut-off function $\zeta$, we conclude from \eqref{jul10} that
\begin{equation}\label{jul11}
       \int_{Q_{\delta,T}} \mathcal{Q}(\mathbb{D}\bv^\varepsilon, \partial_t\mathbb{D}\bv^\varepsilon)
        \le C(m,M,L,T, \|\bv_0\|_2^2, \delta^{-1}).
\end{equation}

The sixth,  and final, estimate concerns $\mathbb{S}^{\varepsilon}:=\mathbb{S}(\mathbb{D}\bv^{\varepsilon})$. It follows from \eqref{jul5} and \eqref{jul11} that
\begin{equation}\label{jul12}
\int_{Q_{\delta,T}} \mathcal{Q}(\mathbb{D}\bv^\varepsilon, \nabla \mathbb{D}\bv^\varepsilon) + \mathcal{Q}(\mathbb{D}\bv^\varepsilon, \partial_t \mathbb{D}\bv^\varepsilon) \le C(m,M,L,T, \|\bv_0\|_2^2, \delta^{-1}).
\end{equation}
 This means that we are  in the same position as in Subsect.~\ref{4.6}, see the estimate \eqref{k72}, but this time the estimate is independent of $\varepsilon$. Following the steps performed in Subsect.~\ref{4.6} without any additional change, we conclude that
%This together with \eqref{k71} implies that, for $2(1-a)>1 \iff a\in (0,1/2)$, $\{\mathbb{S}^n\}$ satisfies the improved estimates (uniform with respect to $n$)
\begin{equation}\label{jul13}
\textrm{if } \quad a\in (0,1/2) \quad \textrm{ then } \quad \|\mathbb{S}^\varepsilon\|_{L^{2(1-a)}(Q_{\delta,T})} \le C(m,M,L,T, \|\bv_0\|_2^2, \delta^{-1}).
\end{equation}

\subsection{The limit \texorpdfstring{$\varepsilon\to 0_{+}$}{e}}\label{4.10}

It follows from the estimates \eqref{jul1}, \eqref{jul7}, the Aubin--Lions compactness lemma, and further from the estimates \eqref{jul13} that there is a subsequence $\left\{(\bv^\ell, \mathbb{S}^\ell)\right\}_{\ell=1}^{\infty}:=\left\{(\bv^{\varepsilon_\ell}, \mathbb{S}^{\varepsilon_\ell})\right\}_{\ell=1}^{\infty}$ of $\left\{(\bv^\varepsilon, \mathbb{S}^\varepsilon)\right\}_{\varepsilon>0}$ such that as $\ell \to \infty$ ($\iff \varepsilon_{\ell} \to 0_{+}$)
\begin{subequations}\label{lim_july}
\begin{alignat}{2}
    \label{jul14}
    \bv^\ell&\rightharpoonup \bv &&\quad \text{weakly$^*$ in~} L^{\infty}\big(0,T, W^{1,\infty}_{per,div}\big) \quad \textrm{ and } \quad \|\mathbb{D}\bv\|_{\infty,Q}\le M, \\ \label{jul15}
    \partial_t{\bv}^\ell &\rightharpoonup \, \partial_t {\bv} &&\quad \text{weakly in~} L^{2}(Q) \qquad \implies \qquad \bv\in C([0,T], L^2_{per,div})\\  \label{jul16}
    \bv^n&\to \bv &&\quad \text{strongly in~} L^q(Q), \qquad (q\in [1, \infty)),
    \\\label{jul17}
\mathbb{S}^\ell&\rightharpoonup \overline{\mathbb{S}} &&\quad \text{weakly in~} L^{2(1-a)}(Q_{\delta,T}) \qquad \textrm{for any } \delta>0 \textrm{ fixed},
\end{alignat}
\end{subequations}
where $\overline{\mathbb{S}}$ is an element of $L^{2(1-a)}(Q_{\delta,T})$, but we have to show that $\overline{\mathbb{S}}=\mathbb{S}(\mathbb{D}\bv)$.

Next, multiplying \eqref{wf-rovnice} by $\psi\in L^{\infty}(0,T)$, integrating the result over $(\delta,T)$, we study the limit as $\ell\to \infty$. Applying \eqref{jul15} to the first term, \eqref{jul14} and \eqref{jul16} to the second term, \eqref{jul17} to the third term, and using the estimate \eqref{jul4} to conclude that the fourth term vanishes as $\varepsilon \to 0_{+}$, we obtain
\begin{equation}
    \begin{split}
       \int_\delta^T (\partial_t \bv,\vect{\varphi}) - (\bv\otimes\bv,\mathbb{D}\vect{\varphi}) &+ (\overline{\mathbb{S}}, \mathbb{D}\vect{\varphi}) = 0 \\ &\textrm{ for all } \vect{\varphi}\in L^{2}(0,T; W^{1,2}_{per,div}) \textrm{ with } \mathbb{D}\vect{\varphi} \in L^{\infty}(Q).
    \end{split}\label{wf-jul}
\end{equation}

Also,  thanks to \eqref{jul15}, not only   do we have that $\bv \in C([0,T], L^2(\Omega))$ but we can also take the limit $\ell \to \infty$ in (the weak form of)
$$
\bv^{\ell}(t, \cdot) - \bv_{0,\varepsilon_\ell} = \int_0^t \partial_t\bv^{\ell}
$$
and conclude that \eqref{initial} holds.

Next, we apply a variant of the monotone operator method to conclude that $\overline{\mathbb{S}} = \mathbb{S}(\mathbb{D}\bv)$ a.e. in $Q$. To do so, we set $\vect{\varphi} = \bv^{\ell}(t, \cdot)$ in \eqref{wf-rovnice} and integrate the result over $(\delta,T)$. This leads to
\begin{equation}\label{jul18}
    \frac12 \|\bv^\ell(T, \cdot)\|_2^2 + \int_{Q_{\delta,T}} \mathbb{S}(\mathbb{D} \bv^\ell) \cdot \mathbb{D}\bv^\ell + \varepsilon_{\ell} \int_{Q_{\delta,T}} (1+|\mathbb{D}\bv^\ell|^2) |\mathbb{D}\bv^\ell|^2 = \frac12 \|\bv^\ell(\delta, \cdot)\|_2^2.
\end{equation}
We also set $\vect{\varphi} = \bv$ in \eqref{wf-jul} and arrive at
\begin{equation}\label{jul19}
    \frac12 \|\bv(T, \cdot)\|_2^2 + \int_{Q_{\delta,T}} \overline{\mathbb{S}} \cdot \mathbb{D}\bv= \frac12 \|\bv(\delta, \cdot)\|_2^2.
\end{equation}
Since $\mathbb{S}(\mathbb{D}\bv)$ generates a monotone operator, see Lemma~\ref{monotone}, we observe, using also \eqref{jul18}, in which we neglect the third (nonnegative) term, that
\begin{equation} \label{jul19a}
\begin{split}
   0 &\le \int_{Q_{\delta,T}} (\mathbb{S}(\mathbb{D}\bv^{\ell}) - \mathbb{S}(\mathbb{B})\cdot (\mathbb{D}\bv^{\ell} - \mathbb{B}) \\
   &\le \frac12 \|\bv^\ell(\delta, \cdot)\|_2^2 - \frac12 \|\bv^\ell(T, \cdot)\|_2^2 - \int_{Q_{\delta,T}} \mathbb{S}(\mathbb{D}\bv^{\ell})\cdot \mathbb{B} -
   \int_{Q_{\delta,T}} \mathbb{S}(\mathbb{B})\cdot (\mathbb{D}\bv^\ell - \mathbb{B}),
\end{split}
\end{equation}
which holds for any $\mathbb{B}\in L^{\infty} (Q)$ with $\|\mathbb{B}\|_{\infty, Q}<M$ satisfying $\mathbb{S}(\mathbb{B})\in L^1(Q_{\delta,T})$.

Letting $\ell\to \infty$ in \eqref{jul19a}, using \eqref{jul14}, \eqref{jul16}, \eqref{jul17} and the facts that $\bv^{\ell}, \bv\in C([0,T]; L^2_{div,per})$, we obtain
\begin{equation} \label{jul20}
    \begin{split}
   0
   &\le \frac12 \|\bv(\delta, \cdot)\|_2^2 - \frac12 \|\bv(T, \cdot)\|_2^2 - \int_{Q_{\delta,T}} \overline{\mathbb{S}} \cdot \mathbb{B} -
   \int_{Q_{\delta,T}} \mathbb{S}(\mathbb{B})\cdot (\mathbb{D}\bv - \mathbb{B}) \\
   &\overset{\eqref{jul19}}{=} \int_{Q_{\delta,T}} \overline{\mathbb{S}} \cdot \mathbb{D}\bv - \int_{Q_{\delta,T}} \overline{\mathbb{S}} \cdot \mathbb{B} -
   \int_{Q_{\delta,T}} \mathbb{S}(\mathbb{B})\cdot (\mathbb{D}\bv - \mathbb{B}),
   \end{split}
\end{equation}
which can be rewritten as
\begin{equation}\label{E71}
   \int_{Q_{\delta,T}} (\overline{\mathbb{S}} - \mathbb{S}(\mathbb{B})\cdot (\mathbb{D}\bv - \mathbb{B}) \ge 0 \quad \textrm{ for all $\mathbb{B}$ specified above.}
\end{equation}
Next, take any $\kappa_0<M/2$ and consider $\mathbb{A}\in L^{\infty}(Q)$  such that $\|\mathbb{A}\|_{\infty}\le 1$. Setting
$$
\mathbb{B}:= \D\bv \chi_{\{|\D\bv|>M-2\kappa\}} + \left(\D\bv - \kappa\mathbb{A}\right)\chi_{\{|\D\bv|\le M-2\kappa\}} \qquad \textrm{ with } \, \kappa\in (0,\kappa_0),
$$
we see that $\mathbb{S}(\mathbb{B})\in L^1(Q_{\delta,T})$. Upon inserting such a $\mathbb{B}$ into \eqref{E71}, we get
\begin{equation*}%\label{E71}
   \kappa \int_{Q_{\delta,T}\cap\{|\D\bv|\le M-2\kappa_0\}} (\overline{\mathbb{S}} - \mathbb{S}(\D\bv - \kappa\mathbb{A}))\cdot \mathbb{A} \ge 0.
\end{equation*}
Dividing  this by $\kappa$ and  then letting $\kappa\to 0_{+}$, we deduce  that
\begin{equation*}
   \int_{Q_{\delta,T}\cap\{|\D\bv|\le M-2\kappa_0\}} (\overline{\mathbb{S}} - \mathbb{S}(\D\bv))\cdot \mathbb{A} \ge 0
\qquad \textrm{ for any } \mathbb{A} \textrm{  such that } \|\mathbb{A}\|_{\infty}\le 1.
\end{equation*}
Hence, setting
$$
\mathbb{A}:=-\frac{\overline{\mathbb{S}} - \mathbb{S}(\D\bv)}{1+|\overline{\mathbb{S}} - \mathbb{S}(\D\bv)|},
$$
we get
$$
\overline{\mathbb{S}}= \mathbb{S}(\mathbb{D}\bv) \quad \textrm{ a.e.  in } Q_{\delta,T}\cap \{\|\D\bv|\le M-2\kappa_0\}.
$$
Since $|\D\bv|< M$ a.e. in Q and $\kappa_0\in (0, M/2)$ is arbitrary, we finally come to the conclusion that
\begin{equation}
    \label{jul21}
    \overline{\mathbb{S}}= \mathbb{S}(\mathbb{D}\bv) \quad \textrm{ a.e.  in } Q_{\delta,T}.
\end{equation}

It remains to extend the validity of \eqref{wf-jul} (with the relation \eqref{jul21}  we have just proved) on the whole time interval $(0,T)$.  To this end, it suffices to show that $\mathbb{S}(\mathbb{D}\bv)\in L^1(Q)$. We  know however from \eqref{jul2} that
\begin{equation}
    \label{k90new} \int_{Q_{\delta,T}} |\mathbb{S}(\mathbb{D}\bv^\ell)| \le \int_Q |\mathbb{S}(\mathbb{D}\bv^\ell)|\le \frac{\|\bv_0\|^2_2}{m}.
\end{equation}
 Thanks to \eqref{jul17} and \eqref{jul21}, $\mathbb{S}(\mathbb{D}\bv^\ell) \rightharpoonup \mathbb{S}(\mathbb{D})$ weakly in $L^1(Q_{\delta,T})$. Using the convexity of the $L^1$-norm, we conclude from \eqref{k90new} that
\begin{equation}
    \label{jul22} \int_{Q_{\delta,T}} |\mathbb{S}(\mathbb{D}\bv)| \le \liminf_{\ell\to\infty} \int_{Q_{\delta,T}} |\mathbb{S}(\mathbb{D}\bv^\ell)|\le \frac{\|\bv_0\|^2_2}{m}.
\end{equation}
Since \eqref{jul22} holds for any $\delta>0$ and the upper bound in \eqref{jul22} is independent of $\delta$, we  deduce that, indeed, $\mathbb{S}(\mathbb{D}\bv)\in L^1(Q)$.

The proof of Theorem~\ref{main_thm} is complete.

\section{Concluding remarks}
\label{sec:5}

We have proved that  the spatially periodic initial-boundary-value problem for the activated Euler fluid model~\eqref{zv_17} is well-posed in the sense of Hadamard. This, in particular, means that \emph{there exists a unique weak solution satisfying the energy equality, globally-in-time}. As the activation parameter $m$ in~\eqref{zv_17} can be arbitrarily large, the activated Euler fluid model may be seen to be a suitable (well-posed) alternative to the classical Euler fluid model.  A number of open problems concerning finite-time singularity formation for an Euler fluid formulated in   the interesting recent survey~\cite{DrEl23} could be attacked from this alternative perspective, see also for example~\cite{Co19}.

Our   forthcoming objective   will be to consider internal flows in bounded containers and extend the result   established here to   various initial-boundary-value problems   involving different boundary conditions. It seems   to be an advantageous feature of the activated Euler fluid   model that   it allows one   to incorporate, in a very simple manner, standard boundary conditions, such as no-slip, Navier's slip, or perfect slip.

% Literature notes/Interesting papers:
% \cite{HamelNadirashvili2023}, see also citations of this work in MathSciNet.
% \cite{Coultand19}=\cite{Co19} - finite-time singularity formation for moving interface Rayleigh–Taylor instability
% \cite{AmMa09},\cite{AmMa07} Surface tension has been believed to remove the Kelvin-Helmholtz instability,
% \cite{CaLoSa20} - Vortex layers of small thickness
%\cite{AyKuOz24} - Construction of the free-boundary 3D incompressible Euler flow under limited regularity

%\cite{HaNa23} - We prove that if the flow does not have any stagnation point and satisfies rigid wall boundary conditions together with further conditions at infinity in the case of unbounded domains and at the center in the case of punctured domains, then the flow is circular, in the sense that the streamlines are concentric circles.

%\cite{DrEl23} - Some classical and recent results on the Euler equations governing perfect (incompressible and inviscid) fluid motion are collected and reviewed, with some small novelties scattered throughout. The perspective and emphasis will be given through the lens of infinite-dimensional dynamical systems, and various open problems are listed and discussed. - {\bf list of open problems}

%\bibliographystyle{plain}
%\bibliography{mbul8060}

\section{Appendix}\label{Appendix}

\noindent In this part of the text we study the properties of the functions ${\mathbb{S}}_n$ and ${\mathbb{S}}$, considered as mappings from $\R^{3\times 3}_{sym}$ to $\R^{3\times 3}_{sym}$, which characterize the response of an activated Euler fluid and its $n$-approximation used in the analysis; see \eqref{constit_old} and \eqref{galerkin3} for their   definitions. For $n\in \mathbb{N}$, $0<m<M<\infty$ and $a>0$, these functions are defined as follows:
\begin{equation}
{\mathbb{S}}(\mathbb{D}) := \frac{(|\mathbb{D}|-m)_{+}}{\left( M^a - |\mathbb{D}|^{a}\right)^{\frac{1}{a}}}\frac{\mathbb{D}}{|\mathbb{D}|} \qquad  \textrm{ for } \mathbb{D}\in \mathcal{B}_M:=\{\mathbb{B}\in \mathbb{R}^{3\times3}_{sym}; |\mathbb{B}|<M\}, \label{ox_11}
\end{equation}
and
\begin{equation}
{\mathbb{S}}_n(\mathbb{D}) := \frac{(|\mathbb{D}|-m)_{+}}{\left( M^a - (\min\{|\mathbb{D}|,M-1/n\})^{a}\right)^{\frac{1}{a}}}\frac{\mathbb{D}}{|\mathbb{D}|} \qquad\qquad  \textrm{ for } \mathbb{D}\in \mathbb{R}^{3\times3}_{sym}.\label{ox_12}
\end{equation}
%Note that $\mathbb{S}$ is defined on the open ball of $\mathbb{R}^{3\times3}_{sym}$ of radius $M$, while $\mathbb{S}_n$ are defined on the whole $\mathbb{R}^{3\times3}_{sym}$.

 By setting
\begin{equation}
    c_{n,M,a}:=M^a - (M-1/n)^a, \label{ox_13}
\end{equation}
we  have that
\begin{align}
    {\mathbb{S}}_n(\mathbb{D}) &= {\mathbb{S}}(\mathbb{D}) &&\textrm{on the set } \{ |\mathbb{D}|\le M-1/n \}, \label{ox_14} \\
    {\mathbb{S}}_n(\mathbb{D}) &=\frac{(|\mathbb{D}|-m)_{+}}{[c_{n,M,a}]^{\frac{1}{a}}} \frac{\mathbb{D}}{|\mathbb{D}|} &&\textrm{on the set } \{ |\mathbb{D}|> M-1/n \}.\label{ox_15}
\end{align}
We are interested in   differentiability properties of these functions. In what follows, the symbol $\partial$ represents the spatial gradient $\nabla$ or the partial derivative $\partial_t$ with respect to time.

%\begin{lemma}\label{ox_16} It holds:
%\end{lemma}
%\begin{proof}
For $|\mathbb{D}|>m$ (and $|\mathbb{D}|<M$), we have
\begin{equation}\notag
   \partial{\mathbb{S}}(\mathbb{D}) = \frac{(|\mathbb{D}|-m)}{\left( M^a - |\mathbb{D}|^{a}\right)^{\frac{1}{a}}}\frac{\partial\mathbb{D}}{|\mathbb{D}|} + \left( \frac{(|\mathbb{D}|-m)|\mathbb{D}|^{a-2}}{\left( M^a - |\mathbb{D}|^{a}\right)^{\frac{1}{a}+1}|\mathbb{D}|} - \frac{(|\mathbb{D}|-m)}{\left( M^a - |\mathbb{D}|^{a}\right)^{\frac{1}{a}}|\mathbb{D}|^3} + \frac{1}{\left( M^a - |\mathbb{D}|^{a}\right)^{\frac{1}{a}}|\mathbb{D}|^2}\right)(\mathbb{D}\cdot\partial \mathbb{D}) \mathbb{D},
\end{equation}
which simplifies to
\begin{equation}\label{ox_17}
   \partial{\mathbb{S}}(\mathbb{D}) = \frac{(|\mathbb{D}|-m)}{\left( M^a - |\mathbb{D}|^{a}\right)^{\frac{1}{a}}}\frac{\partial\mathbb{D}}{|\mathbb{D}|} + \frac{(|\mathbb{D}|-m)|\mathbb{D}|^{a} + m(M^a - |\mathbb{D}|^a)}{\left( M^a - |\mathbb{D}|^{a}\right)^{\frac{1}{a}+1}|\mathbb{D}|^3} (\mathbb{D}\cdot\partial \mathbb{D})\mathbb{D}.
\end{equation}
This implies  that
\begin{equation}\label{ox_18}
   \partial{\mathbb{S}}(\mathbb{D})\cdot\partial\mathbb{D}  = \frac{(|\mathbb{D}|-m)}{\left( M^a - |\mathbb{D}|^{a}\right)^{\frac{1}{a}}}\frac{|\partial\mathbb{D}|^2}{|\mathbb{D}|} + \frac{(|\mathbb{D}|-m)|\mathbb{D}|^{a} + m(M^a - |\mathbb{D}|^a)}{\left( M^a - |\mathbb{D}|^{a}\right)^{\frac{1}{a}+1}|\mathbb{D}|^3} (\mathbb{D}\cdot\partial \mathbb{D})^2=:\mathcal{Q}(\mathbb{D},\partial\mathbb{D})\ge 0.
\end{equation}
Using again \eqref{ox_17}, we also observe that
\begin{equation}\label{ox_19}
\begin{split}
   |\partial{\mathbb{S}}(\mathbb{D})|^2 &=\partial{\mathbb{S}}(\mathbb{D})\cdot\partial{\mathbb{S}}(\mathbb{D}) = \frac{(|\mathbb{D}|-m)^2}{\left( M^a - |\mathbb{D}|^{a}\right)^{\frac{2}{a}}}\frac{|\partial\mathbb{D}|^2}{|\mathbb{D}|^2} \\
   & + \left(\frac{2(|\mathbb{D}|-m)\left[(|\mathbb{D}|-m)|\mathbb{D}|^{a} + m(M^a - |\mathbb{D}|^a)\right]}{\left( M^a - |\mathbb{D}|^{a}\right)^{\frac{2}{a}+1}|\mathbb{D}|^4} + \frac{\left[(|\mathbb{D}|-m)|\mathbb{D}|^{a} + m(M^a - |\mathbb{D}|^a)\right]^2}{\left( M^a - |\mathbb{D}|^{a}\right)^{\frac{2}{a}+2}|\mathbb{D}|^4}\right) (\mathbb{D}\cdot\partial \mathbb{D})^2.
\end{split}
\end{equation}
 By setting
\begin{align}
    \alpha(|\mathbb{D}|, m, M)&:= |\mathbb{D}|^{-1} (|\mathbb{D}|-m)(M^a - |\mathbb{D}|^a), \label{zv_22}\\
    \beta(|\mathbb{D}|, m, M)&:= |\mathbb{D}|^{-1} \left(2(|\mathbb{D}|-m)(M^a - |\mathbb{D}|^a) + (|\mathbb{D}|-m)|\mathbb{D}|^a + m(M^a-|\mathbb{D}|^a)\right), \label{zv_23}
\end{align}
we can rewrite \eqref{ox_19}, after multiplying it by $(M^a - |\mathbb{D}|^a)^{1+\frac{1}{a}}$, as
%\begin{equation}\label{ox_19a}
%5\begin{split}
%   |\partial{\mathbb{S}}(\mathbb{D})|^2 =\frac{\alpha(|\mathbb{D}|, m, M)\frac{(|\mathbb{D}|-m)|\partial\mathbb{D}|^2}{(M^a - |\mathbb{D}|^a)^{\frac{1}{a}}} + \beta(|\mathbb{D}|, m, M)\frac{(|\mathbb{D}|-m)|\mathbb{D}|^{a} + m(M^a - |\mathbb{D}|^a)}{\left( M^a - |\mathbb{D}|^{a}\right)^{\frac{1}{a}+1}|\mathbb{D}|^3} (\mathbb{D}\cdot\partial \mathbb{D})^2}{(M^a - |\mathbb{D}|^a)^{1+\frac{1}{a}}}
%\end{split}
%\end{equation}
{\small
\begin{equation}\label{ox_19a}
   (M^a - |\mathbb{D}|^a)^{1+\frac{1}{a}}|\partial{\mathbb{S}}(\mathbb{D})|^2 =\alpha(|\mathbb{D}|, m, M)\frac{(|\mathbb{D}|-m)|\partial\mathbb{D}|^2}{(M^a - |\mathbb{D}|^a)^{\frac{1}{a}}} + \beta(|\mathbb{D}|, m, M)\frac{(|\mathbb{D}|-m)|\mathbb{D}|^{a} + m(M^a - |\mathbb{D}|^a)}{\left( M^a - |\mathbb{D}|^{a}\right)^{\frac{1}{a}+1}|\mathbb{D}|^3} (\mathbb{D}\cdot\partial \mathbb{D})^2.
\end{equation}}

\smallskip
\noindent
 By recalling the definition of $\mathcal{Q}(\mathbb{D},\partial\mathbb{D})$ introduced in \eqref{ox_18},
it follows from \eqref{ox_19a} that
\begin{equation}\label{ox_19b}
   (M^a - |\mathbb{D}|^a)^{1+\frac{1}{a}}|\partial{\mathbb{S}}(\mathbb{D})|^2 \le \max\{\alpha(|\mathbb{D}|, m, M), \beta(|\mathbb{D}|, m, M)\} \, \mathcal{Q}(\mathbb{D},\partial\mathbb{D}).
\end{equation}

We proceed in a similar manner for the function ${\mathbb{S}}_n$. For $|\mathbb{D}|>m$ and $|\mathbb{D}|<M-1/n$, \eqref{ox_14} holds and hence the above calculations remain unchanged. For $|\mathbb{D}|> M-1/n$, we use the formula \eqref{ox_15} and obtain
\begin{equation}\label{ox_20}
    \begin{split}
   \partial{\mathbb{S}}_n(\mathbb{D}) &= \frac{1}{[c_{n,M,a}]^{\frac{1}{a}}}\left( \frac{(|\mathbb{D}|-m)\partial\mathbb{D}}{|\mathbb{D}|} + \left( \frac{1}{|\mathbb{D}|^{2}} - \frac{(|\mathbb{D}|-m)}{|\mathbb{D}|^3}\right)(\mathbb{D}\cdot\partial \mathbb{D}) \mathbb{D}\right)\\
   &= \frac{1}{[c_{n,M,a}]^{\frac{1}{a}}}\left( \frac{(|\mathbb{D}|-m)\partial\mathbb{D}}{|\mathbb{D}|} + \frac{m(\mathbb{D}\cdot\partial \mathbb{D}) \mathbb{D}}{|\mathbb{D}|^3}\right).
    \end{split}
\end{equation}
Then,
\begin{equation}\label{ox_21}
   \partial{\mathbb{S}}_n(\mathbb{D})\cdot\partial\mathbb{D}  = \frac{1}{[c_{n,M,a}]^{\frac{1}{a}}}\left(\frac{(|\mathbb{D}|-m)|\partial\mathbb{D}|^2}{|\mathbb{D}|} + \frac{m(\mathbb{D}\cdot\partial \mathbb{D})^2}{|\mathbb{D}|^3}\right). %=:\mathcal{Q}_n(\mathbb{D},\partial\mathbb{D})\ge 0.
\end{equation}
At this point, recalling definition of $\mathcal{Q}(\mathbb{D},\partial\mathbb{D})$ (as introduced in \eqref{ox_18}), we set
\begin{equation} \label{ox_22}
\mathcal{Q}_n(\mathbb{D},\partial\mathbb{D}):=
    \begin{cases}
        &\mathcal{Q}(\mathbb{D},\partial\mathbb{D}) \qquad \qquad \qquad\qquad \qquad\qquad \,\,\quad\textrm{ if  } |\mathbb{D}|<M-1/n, \\
        & \frac{1}{[c_{n,M,a}]^{\frac{1}{a}}}\left(\frac{(|\mathbb{D}|-m)|\partial\mathbb{D}|^2}{|\mathbb{D}|} + \frac{m(\mathbb{D}\cdot\partial \mathbb{D})^2}{|\mathbb{D}|^3}\right) \qquad \, \textrm{ if  } |\mathbb{D}|\ge M-1/n.
    \end{cases}
\end{equation}
Hence,
\begin{equation}
    \label{ox_22a} \partial \mathbb{S}_n(\mathbb{D}) \cdot \partial \mathbb{D} = \mathcal{Q}_n(\mathbb{D},\partial\mathbb{D}).
\end{equation}
Using again \eqref{ox_20}, we also observe that, for $|\mathbb{D}|\ge M- 1/n$,
\begin{equation}\label{ox_23}
    \begin{split}
   |\partial{\mathbb{S}}_n(\mathbb{D})|^2 &=\partial{\mathbb{S}}_n(\mathbb{D})\cdot\partial{\mathbb{S}}_n(\mathbb{D})= \frac{1}{[c_{n,M,a}]^{\frac{2}{a}}} \left( \frac{(|\mathbb{D}|-m)^2}{|\mathbb{D}|^{2}} |\partial\mathbb{D}|^2 + \frac{m(2|\mathbb{D}|-m)}{|\mathbb{D}|^4} (\mathbb{D}\cdot\partial \mathbb{D})^2\right) \\
   &\le \frac{1}{[c_{n,M,a}]^{\frac{1}{a}}}\frac{2|\mathbb{D}|-m}{|\mathbb{D}|} \mathcal{Q}_n(\mathbb{D},\partial\mathbb{D}).
   \end{split}
\end{equation}

Our next aim is to show that
\begin{equation}
    \frac{|\partial \mathbb{S}_n(\mathbb{D})|^2}{(1+|\mathbb{S}_n(\mathbb{D})|)^{1+a}} \le C \mathcal{Q}_n(\mathbb{D},\partial\mathbb{D}) \quad \textrm{ for all } |\mathbb{D}|\ge \tfrac{M+m}{2}. \label{ox_24}
\end{equation}
 Thanks to the definition of $\mathbb{S}_n$, we consider two subcases: (1) $\tfrac{M+m}{2}\le |\mathbb{D}|< M- \tfrac{1}{n}$, and (2) $|\mathbb{D}|\ge M-\tfrac{1}{n}$.

First, if $\boxed{\tfrac{M+m}{2}\le |\mathbb{D}|< M- \tfrac{1}{n}}$, then \eqref{ox_14} and \eqref{ox_11} imply that
\begin{equation}
    \label{ox_25} |\mathbb{S}_n(\mathbb{D})|^a = \frac{(|\mathbb{D}|-m)^a}{(M^a - |\mathbb{D}|^a)} \quad\implies\quad 1+ |\mathbb{S}_n(\mathbb{D})|^a = \frac{(M^a - |\mathbb{D}|^a)+(|\mathbb{D}|-m)^a}{(M^a - |\mathbb{D}|^a)}.
\end{equation}
As $|\mathbb{D}|\ge \tfrac{M+m}{2}$, we observe (using $M^a - |\mathbb{D}|^a>0$ and $(|\mathbb{D}|-m)\ge \frac{M-m}{2}$ to get the first inequality, and $|\mathbb{D}|^a - (|\mathbb{D}|-m)^a >0$ to get the second inequality) that
\begin{equation}
    \label{ox_26} \frac{\left(\frac{M-m}{2}\right)^a}{M^a - |\mathbb{D}|^a} \le 1+|\mathbb{S}_n(\mathbb{D})|^a \le \frac{M^a}{M^a - |\mathbb{D}|^a}.
\end{equation}
Hence,
\begin{equation}
    \label{ox_27} \frac{\left(\frac{M-m}{2}\right)^{1+a}}{(1+|\mathbb{S}_n(\mathbb{D})|^a)^{1+\frac{1}{a}}} \le \left(M^a - |\mathbb{D}|^a\right)^{1+\frac{1}{a}} \le \frac{M^{1+a}}{(1+|\mathbb{S}_n(\mathbb{D})|^a)^{1+\frac{1}{a}}}.
\end{equation}
With this piece of information, referring to \eqref{ox_19b} and noticing that on the considered range of $|\mathbb{D}|$ the functions $\alpha$ and $\beta$ are bounded by some constant $C=C(m,M,a)$, we conclude that
\begin{equation}
    \left(\frac{M-m}{2}\right)^{1+a}\frac{|\partial \mathbb{S}_n(\mathbb{D})|^2}{(1+|\mathbb{S}_n(\mathbb{D})|^a)^{1+\frac{1}{a}}} \le C(m,M,a) \mathcal{Q}_n(\mathbb{D},\partial\mathbb{D}),\label{ox_28}
\end{equation}
which implies \eqref{ox_24} on the  interval considered.

Second, if $\boxed{|\mathbb{D}|\ge M- \tfrac{1}{n}}$, then we observe that \eqref{ox_15} implies that
\begin{equation}
    \label{ox_29} |\mathbb{S}_n(\mathbb{D})| = \frac{(|\mathbb{D}|-m)}{[c_{n,M,a}]^{\frac{1}{a}}} \quad\implies\quad (1+ |\mathbb{S}_n(\mathbb{D})|)^{a+1} = \frac{([c_{n,M,a}]^{\frac{1}{a}} + |\mathbb{D}| - m )^{a+1}}{[c_{n,M,a}]^{1+\frac{1}{a}}}.
\end{equation}
Hence,
\begin{equation}
    \frac{|\partial \mathbb{S}_n(\mathbb{D})|^2}{(1+|\mathbb{S}_n(\mathbb{D})|)^{1+a}} \le \frac{2[c_{n,M,a}]}{\left([c_{n,M,a}]^{\frac{1}{a}} + |\mathbb{D}|-m\right)^{a+1}} \mathcal{Q}_n(\mathbb{D},\partial\mathbb{D}) \le \frac{8[c_{n,M,a}]}{(M-m)^{a+1}} \mathcal{Q}_n(\mathbb{D},\partial\mathbb{D}),\label{ox_30}
\end{equation}
where we used the inequality $|\mathbb{D}|-m + [c_{n,M,a}]^{\frac{1}{a}} \ge M-\tfrac{1}{n}-m \ge (M-m)/2$, valid on the considered range of $|\mathbb{D}|\ge M-1/n$. Note that  thanks to the definition of $c_{n,M,a}$, the coefficient in front of $\mathcal{Q}_n(\mathbb{D},\partial\mathbb{D})$ in \eqref{ox_30} can be made arbitrarily small. In any case, the proof of \eqref{ox_24} is complete.

\begin{lemma}\label{monotone}
Let $a>0$ and $0<m<M<\infty$. Then, the mappings $\mathbb{S}:\mathcal{B}_M \to\R^{3\times 3}_{sym}$ and $\mathbb{S}_n:\R^{3\times 3}_{sym} \to\R^{3\times 3}_{sym}$ defined in \eqref{ox_11} and \eqref{ox_12}  satisfy
\begin{align}
\bigl(\mathbb{S}(\mathbb{D}_1)-\mathbb{S}(\mathbb{D}_2)\bigr)\cdot(\mathbb{D}_1-\mathbb{D}_2) &\ge 0 \qquad \textrm{for all } \mathbb{D}_1, \mathbb{D}_2\in\mathcal{B}_M, \label{dec:5} \\
\bigl(\mathbb{S}_n(\mathbb{D}_1)-\mathbb{S}_n(\mathbb{D}_2)\bigr)\cdot(\mathbb{D}_1-\mathbb{D}_2) &\ge 0 \qquad \textrm{for all } \mathbb{D}_1, \mathbb{D}_2\in\R^{3\times 3}_{sym}. \label{dec:5M}
\end{align}
\end{lemma}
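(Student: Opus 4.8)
The plan is to use the fact that both $\mathbb{S}$ and $\mathbb{S}_n$ are \emph{radial} maps of the form $\mathbb{T}(\mathbb{D})=h(|\mathbb{D}|)\,\mathbb{D}$, where $h(t):=g(t)/t$ for $t>0$ and $h(0):=0$, and $g$ is the scalar magnitude function occurring in the definition: $g(t)=(t-m)_{+}\,(M^a-t^a)^{-1/a}$ on $[0,M)$ in the case of $\mathbb{S}$, and $g=f_n$ on $[0,\infty)$ in the case of $\mathbb{S}_n$, cf.~\eqref{galerkin3}. Since $m>0$, both magnitude functions vanish on a neighbourhood of the origin, so $h$ is well defined (and continuous) at $\mathbb{D}=0$ and no separate treatment of the cases $\mathbb{D}_1=0$ or $\mathbb{D}_2=0$ is needed.

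First I would record the elementary monotonicity of $g$ on its domain. For $\mathbb{S}$: $g\equiv 0$ on $[0,m]$, while on $(m,M)$ the numerator $t\mapsto t-m$ is positive and increasing and the denominator $t\mapsto (M^a-t^a)^{1/a}$ is positive and decreasing, so $g$ is increasing there; hence $g$ is nondecreasing on $[0,M)$. For $\mathbb{S}_n$: $g=f_n$ equals $0$ on $[0,m]$, coincides with the previous $g$ (hence increases) on $[m,M-1/n]$, and on $[M-1/n,\infty)$ has the constant denominator $c_{n,M,a}^{1/a}$ (see~\eqref{ox_13}) with still-increasing numerator $t-m$, so it increases there as well; continuity at $t=M-1/n$ then gives that $f_n$ is nondecreasing on $[0,\infty)$. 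In both cases $h\ge 0$ throughout.

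The core is then a single algebraic step, valid for any nonnegative $h$ and any admissible $\mathbb{D}_1,\mathbb{D}_2$. Expanding the scalar product,
$$
\bigl(h(|\mathbb{D}_1|)\mathbb{D}_1-h(|\mathbb{D}_2|)\mathbb{D}_2\bigr)\cdot(\mathbb{D}_1-\mathbb{D}_2)
= h(|\mathbb{D}_1|)|\mathbb{D}_1|^2+h(|\mathbb{D}_2|)|\mathbb{D}_2|^2-\bigl(h(|\mathbb{D}_1|)+h(|\mathbb{D}_2|)\bigr)\,\mathbb{D}_1\cdot\mathbb{D}_2,
$$
and, using $h(|\mathbb{D}_1|)+h(|\mathbb{D}_2|)\ge 0$ together with $\mathbb{D}_1\cdot\mathbb{D}_2\le|\mathbb{D}_1|\,|\mathbb{D}_2|$, one bounds the right-hand side from below by
$$
h(|\mathbb{D}_1|)|\mathbb{D}_1|^2+h(|\mathbb{D}_2|)|\mathbb{D}_2|^2-\bigl(h(|\mathbb{D}_1|)+h(|\mathbb{D}_2|)\bigr)|\mathbb{D}_1|\,|\mathbb{D}_2|
=\bigl(g(|\mathbb{D}_1|)-g(|\mathbb{D}_2|)\bigr)\bigl(|\mathbb{D}_1|-|\mathbb{D}_2|\bigr)\ge 0,
$$
the last inequality because $g$ (resp.\ $f_n$) is nondecreasing. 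This proves \eqref{dec:5} and \eqref{dec:5M} at once. I would present this radial-map computation once as the decisive step and then simply instantiate it for $\mathbb{S}$ on $\mathcal{B}_M$ and for $\mathbb{S}_n$ on $\R^{3\times3}_{sym}$, invoking the monotonicity of $g$, resp.\ $f_n$, established above.

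There is essentially no obstacle; the only points requiring a little care are the behaviour at $\mathbb{D}_i=0$ (dispatched by $m>0$) and not over-claiming — one does \emph{not} need $h$ itself to be monotone, only $h\ge0$ and monotonicity of $t\mapsto t\,h(t)=g(t)$, which is exactly what the first step supplies.
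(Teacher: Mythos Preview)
Your proof is correct and follows essentially the same route as the paper: write the map in radial form, use the Cauchy--Schwarz inequality $\mathbb{D}_1\cdot\mathbb{D}_2\le|\mathbb{D}_1|\,|\mathbb{D}_2|$ to reduce to $(g(|\mathbb{D}_1|)-g(|\mathbb{D}_2|))(|\mathbb{D}_1|-|\mathbb{D}_2|)\ge0$, and conclude from the monotonicity of the scalar magnitude function. Your treatment is slightly more explicit about why $g$ and $f_n$ are nondecreasing and about the harmless degeneracy at $\mathbb{D}=0$, but the argument is the same.
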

\begin{proof} We start by observing that the functions $\mathbb{S}$ and $\mathbb{S}_n$ have the following structure:
$$
    \mathbb{S}(\mathbb{D})=f(|\mathbb{D}|)\frac{\mathbb{D}}{|\mathbb{D}|} \qquad \textrm{ and } \qquad \mathbb{S}_n(\mathbb{D})=f_n(|\mathbb{D}|)\frac{\mathbb{D}}{|\mathbb{D}|},
$$
whereas the functions $f$ %:\mathbb{R}\to \mathbb{R}$
and $f_n$ %\mathbb{R}\to \mathbb{R}$
 are nondecreasing on their domains. Therefore, using the Cauchy--Schwarz inequality, we have
%
%We start noticing that the function $\mathbb{S}%(\cdot)$ introduced in \eqref{constit_old} is defined on the set $\{\mathbb{D}: |\mathbb{D}|<M\}$ and vanishes on the set $\{\mathbb{D}: |\mathbb{D}|\le m\}$. This leads to three different scenarios.
%
%{\it (i)} If $0\le |\mathbb{D}_1|, |\mathbb{D}_2| \le m$, then $\mathbb{S}(\mathbb{D}_1)=\mathbb{S}(\mathbb{D}_2)=\mathbb{O}$ and $(lhs)$ in \eqref{dec:5} equals zero. Hence \eqref{dec:5} holds.
%
%{\it (ii)} If $|\mathbb{D}_1| \le m < |\mathbb{D}_2| <M $, then $\mathbb{S}(\mathbb{D}_1)=\mathbb{O}$ and we have
%\begin{equation*}
%    (lhs)= - \mathbb{S}(\mathbb{D}_2): (\mathbb{D}_1 - \mathbb{D}_2) =
%    \frac{(|\mathbb{D}_2|-m)}{\left( M^a - |\mathbb{D}_2|^{a}\right)^{\frac{1}{a}}}\frac{|\mathbb{D}_2|^2 - \mathbb{D}_2\cdot\mathbb{D}_1}{|\mathbb{D}_2|} \ge \frac{(|\mathbb{D}_2|-m)}{\left( M^a - |\mathbb{D}_2|^{a}\right)^{\frac{1}{a}}}\frac{|\mathbb{D}_2|(|\mathbb{D}_2|- |\mathbb{D}_1|}{|\mathbb{D}_2|} > 0,
%\end{equation*}
%which gives \eqref{dec:5}.
%
%{\it (iii)} If $m\le |\mathbb{D}_1|, |\mathbb{D}_2| < M$, then, for any $s\in [0,1]$, we set $\mathbb{D}_s:= \mathbb{D}_2 + s(\mathbb{D}_1-\mathbb{D}_2)$ and observe, using \eqref{ox_11}, that
\begin{align*}
    (\mathbb{S}(\mathbb{D}_1)-\mathbb{S}(\mathbb{D}_2)) \cdot (\mathbb{D}_1-\mathbb{D}_2) &= f(|\mathbb{D}_1|)\frac{\mathbb{D}_1}{|\mathbb{D}|_1} \cdot \mathbb{D}_1 -f(|\mathbb{D}_1|)\frac{\mathbb{D}_1}{|\mathbb{D}_1|}\cdot \mathbb{D}_2-f(|\mathbb{D}_2|)\frac{\mathbb{D}_2}{|\mathbb{D}_2|}\cdot \mathbb{D}_1+f(|\mathbb{D}_2|)\frac{\mathbb{D}_2}{|\mathbb{D}|_2} \cdot \mathbb{D}_2\\
    &\ge f(|\mathbb{D}_1|)|\mathbb{D}_1| -f(|\mathbb{D}_1|)|\mathbb{D}_2|-f(|\mathbb{D}_2|)| \mathbb{D}_1|+f(|\mathbb{D}_2|)|\mathbb{D}_2|\\
    &=(f(|\mathbb{D}_1|)-f(|\mathbb{D}_2|))(|\mathbb{D}_1| -|\mathbb{D}_2|).
 \end{align*}
Since $f$ is nondecreasing, the inequality  \eqref{dec:5} follows. The same procedure, applied to $\mathbb{S}_n$, gives \eqref{dec:5M}.
\end{proof}

\end{document}